\newtheorem{thm}{Theorem}[section]
\newtheorem{cor}[thm]{Corollary}
\newtheorem{lem}[thm]{Lemma} 
\newtheorem{ex}[thm]{Example}
\newtheorem{defn}[thm]{Definition}
\newtheorem{rem}[thm]{Remark}
\DeclareMathAlphabet\EuScript{U}{eus}{m}{n}
\SetMathAlphabet\EuScript{bold}{U}{eus}{b}{n}
\newtheorem*{rep@theorem}{\rep@title}
\newcommand{\newreptheorem}[2]{%
	\newenvironment{rep#1}[1]{%
		\def\rep@title{#2 \ref{##1}}%
		\begin{rep@theorem}}%
		{\end{rep@theorem}}}
\begin{document}

	\title{
		{\textbf{Positive definite matrix valued kernels and their scalar valued  projections: counterexamples}} \vspace{-4pt}
	}

	\author[1]{J. C. Guella}
	\email{jcguella@unicamp.br}
	\address{Institute of Mathematics, Statistics and Scientific Computing, University of Campinas}

 	\keywords{ Matrix valued kernels, Strictly positive definite kernels, Scalar valued projections} 
 
	 \subjclass[2020]{ 42A82, 43A35,47A56}

 	\maketitle
  
	\begin{center}
		\parbox{13 cm}{{\small {\bf Abstract:} In this paper we show that the strictly positive definite matrix valued isotropic kernels in the circle and the real dot product kernels in Euclidean spaces are not  well behaved with respect to its scalar valued projections. We generalize the counterexamples that we obtained to an abstract setting by using the concepts of  unitarily invariant kernels and adjointly invariant kernels, provided the existence of  an aperiodic invariant function.  }}
	\end{center}

	\tableofcontents

	\section{Introduction}

	A matrix valued kernel $K:X \times X \to M_{\ell}(\mathbb{C})$ is positive definite if for every  finite quantity of distinct points $x_1,x_2, \ldots, x_n$ in $X$ and vectors $v_1,v_2,$ $\ldots, v_n\in \mathbb{C}^{\ell}$, we have
	$$
	\sum_{\mu, \nu=1}^n  \langle K(x_\mu ,x_\nu ) v_{\mu}, v_{\nu}\rangle   \geq 0.
	$$
	In addition, if the  above  inequalities are strict whenever at least one of the vectors $v_\mu$ is nonzero, then the kernel is termed  strictly positive definite.
	
	When $\ell = 1$, the previous definition is the standard notion of positive definite kernel (strictly). A very simple connection between those two concepts is obtained by the   scalar valued projections of the kernel, which are the scalar valued kernels $K_{v}: X \times X \to \mathbb{C}$, $v \in  \mathbb{C}^{\ell} \setminus\{ 0\}$, given by
	\begin{equation}\label{eq1}
		K_{v}(x,y)=\langle K(x ,y ) v, v\rangle, \quad x,y \in X.
	\end{equation}
	
	If the matrix valued kernel $K:X \times X \to M_{\ell}(\mathbb{C})$ is a positive definite (strictly), then all of its scalar valued projections kernels are positive definite (strictly). However, the converse of this property is not true in general. For instance,  given two functions $f:X \to \mathbb{R}$ and $h:X \to \mathbb{R}$, all scalar valued projections of the kernel
	$$
	K(x,y) =	\left(\begin{array}{cc} 2f(x)f(y) & f(x)h(y) + h(x)f(y) \\ f(x)h(y) + h(x)f(y) & 2h(x)h(y) \end{array}\right),
	$$
	are positive definite. Suitable choice of the functions $f$ and $h$ lead to a counterexample, like $f(x_{1})=1$, $f(x_{2})=1$, $h(x_{1})=1$ and $h(x_{2})=2$, for  arbitrary distinct points $x_{1} , x_{2}$ in $X$. Note that if the set $X$ has a topology for which the space real valued continuous functions defined on $X$, that is $C(X, \mathbb{R})$,  has dimension at least two, the above method generates a counterexample for the converse of the scalar valued projections  when the kernel $K$ is continuous.

	However, for some specific kernels, the fact that all scalar valued projections of a matrix valued kernel are positive definite (strictly) implies that the matrix valued kernel is positive definite (strictly):\\

	(1)\textbf{Isotropic kernels in real spheres} $X=S^d$, the unit sphere in $\mathbb{R}^{d+1}$, $K$ continuous and also isotropic in the sense that
	$$
	K(Qx,Qy)=K(x,y), \quad x,y \in S^d; Q\in \mathcal{O}(d+1),
	$$
	where $\mathcal{O}(d+1)$ is the set of all orthogonal transformations on $\mathbb{R}^{d+1}$.\ The characterization of the positive definite scalar valued  kernels fulfilling this condition was achieved in \cite{schoen}, while the strict case was proved in \cite{chen} ($d\geq 2$)  and \cite{spds1} ($d=1$). The positive definite matrix valued case traces back to \cite{yaglom}, while the strict case was proved in \cite{porcu} with the exception of the case $d=1$, which remained open. With the exception of strict kernels in $S^{1}$, all of them are well behaved  with respect to the scalar valued projections. \\
	There is also the limiting case, of kernels defined on $S^{\infty}$,  the unit sphere of an infinity dimensional real Hilbert space, $K$ is continuous and the isotropy is the invariance of the kernel for all linear isometries in it. The previous references contain the analysis for this case and it is well behaved with respect to the scalar valued projections property.   
	
	(2)\textbf{Isotropic kernels in two-point compact homogeneous spaces} $X=M^d$, where $M^{d}$ is a compact connected two point homogeneous space, but not a real sphere in $\mathbb{R}^{d+1}$, $K$ continuous and also isotropic in the sense that
	$$
	K(Qx,Qy)=K(x,y), \quad x,y \in S^d; Q \in ISO(M^{d})
	$$
	where $ISO(M^{d})$ is the set of all isometries in $M^{d}$. The two-point compact homogeneous spaces were classified in \cite{homo}, while the characterization of the positive definite isotropic kernels were characterized in \cite{gangolli}. The strictly positive definite case was proved in \cite{spdhomo}, and the matrix valued results were obtained in \cite{BONFIM2016237}. Unlike the real sphere, the characterizations for the infinite dimensional case (for the real, complex and quaternionic projective spaces) came as a corollary of a different  paper \cite{limit}. Although there is no paper that explicitly describes the matrix valued case in the infinite dimensional setting, it is well behaved with respect to the scalar valued projections as well as the other cases.

	(3)\textbf{Radial kernels} $X=\mathbb{R}^{d}$, $K$ continuous and also radial in the sense that
	$$
	K(Qx +z,Qy+z)=K(x,y), \quad x,y,z \in \mathbb{R}^d; Q\in \mathcal{O}(d).
	$$
	The characterization of the positive definite scalar valued  kernels fulfilling this condition was achieved in \cite{schoen}, while the strict case was proved in \cite{sun} ($d\geq 2$). The positive definite matrix valued case was characterized in  \cite{covariancema}, while the strict case was proved in \cite{jeanrad} with the exception of the case $d=1$, which remained open. \\
	Like the real sphere, the previous papers contain proofs for the infinite dimensional case.\\
	With the exception of strict  kernels in $R^{1}$, all of them are well behaved  with respect to the scalar valued projections. 
	
	(4)\textbf{ Translation invariant kernels} $X=\mathbb{R}^{d}$, $K$ is continuous and translation invariant, in the sense that
	$$
	K(x+z,y+z)= K(x,y), \quad x,y,z \in \mathbb{R}^{d},
	$$
	The characterization of all positive definite kernels that fulfill this property is in the classical paper \cite{bochner}. Only sufficient conditions of when this kernel is strict are known. An operator valued version of those kernels can be found in  \cite{neb}. Only the positive definite  case is well behaved with respect to the scalar valued projections.

	(5)\textbf{ Real Dot product kernels  } $X=\mathcal{H}$, a real  Hilbert space,  $K$ is continuous and adjointly invariant, in the sense that
	$$
	K(Ax,y)= K(x,A^{t}y), \quad x,y \in \mathcal{H},\quad  A \in \mathcal{L}(\mathcal{H})  
	$$
	where $A^{t}$ is the adjoint operator of $A$.  If a kernel fulfills this property then there exists a function $h: \mathbb{R}  \to \mathbb{R}$ such that $K(x,y)= h(\langle x,y \rangle )$ (we prove this affirmation in Lemma \ref{dot1}). The characterization of which functions $h$ generates a positive definite kernel is obtained as a  corollary of a result in \cite{schoen} when $\dim(\mathcal{H})= \infty$, and generalized to  when $\dim(\mathcal{H})\geq 2$ in \cite{FangyanHongwei2005}. The characterization for when  $\dim(\mathcal{H})=1$ can be obtained as Corollary of Remark 3.9 in \cite{berg0}  on page 161.  The strict case was proved in \cite{pinkus} for $\dim(\mathcal{H}) \geq 2$. We are not aware of the strict  characterization for when $\dim(\mathcal{H})=1$. Only the positive definite case is well behaved with respect to the scalar valued projections.

	In the following example we close the gaps  in the families $(1)$, $(4)$ and $(5)$ by providing an example of a positive definite matrix valued kernel that is not strictly positive definite but whose all scalar valued projections are strictly positive definite:

	\begin{ex}\label{examplekern} Consider the kernels
		$$
		((\cos\theta, \sin \theta),(\cos \vartheta, \sin \vartheta)) \in S^{1} \times S^{1} \to  \left[\begin{array}{cc}
			e^{\cos(\theta - \vartheta)} & e^{\cos(\theta + \rho - \vartheta)} \\
			e^{\cos(\theta  - \vartheta- \rho)}  & e^{\cos(\theta - \vartheta)}
		\end{array}\right], \quad \rho \in [0, 2\pi), \rho \notin \mathbb{Q}\pi   .
		$$		 
		$$
		(x,y) \in \mathbb{R}^{m} \times \mathbb{R}^{m} \to  \left[\begin{array}{cc}
			e^{- \sigma\| x - y \|^{2} } &e^{- \sigma\| x+z - y \|^{2} } \\
			e^{- \sigma\| x - y -z \|^{2} }  & e^{- \sigma\| x - y \|^{2} } 
		\end{array}\right], \quad z \in  \mathbb{R}^{m} \setminus{\{0\}}.
		$$
		
		$$
		(x,y) \in \mathbb{R}^{m} \times \mathbb{R}^{m} \to  \left[\begin{array}{cc}
			e^{r^{2}\langle x,y \rangle}+1 & e^{r\langle x,y \rangle} \\
			e^{r\langle x,y \rangle}  & e^{\langle x,y \rangle} +  1
		\end{array}\right], \quad r \in  \mathbb{R} \setminus{\{0, 1, -1\}}.
		$$
		which are, respectively, an isotropic kernel in $S^{1}$, a translation invariant kernel in $\mathbb{R}^{m}$ and a real dot product kernel in $\mathbb{R}^{m}$. They are positive definite but not strict (which is a consequence of Theorem \ref{construcao} ) and  all scalar valued projections are strictly positive definite. 	
	\end{ex}

	In Section \ref{Unitarily invariant kernels} we generalize the first two examples to a broader setting, of kernels unitarily invariant by a family (more precisely a semigroup) of functions, where the criteria is based on the existence of an aperiodic function in the center of the semigroup. In Section  \ref{Adjointly invariant kernels} we obtain a similar result, by generalizing the third example  for kernels that are adjointly invariant by a semigroup of functions with involution, provided the existence  of an aperiodic function in the center of the semigroup. 
	
	We reemphasize that all known  characterizations of matrix valued strictly positive definite kernels with a symmetry (either unitarily or adjointly), satisfy the scalar valued projections. Hence, for the examples presented, a new and completely different method will be required to characterize  those matrix valued strictly positive definite kernels.

	\section{Unitarily invariant kernels}\label{Unitarily invariant kernels}
	We recall that a semigroup $(S, \circ)$ is a set $S$ together with an operation $\circ :  S \times S \to S$ that is associative, meaning $((a \circ b) \circ c) = (a \circ (b \circ c))$, for every $a,b,c \in S$. A semigroup $(S, \circ)$ has a unity if there exist $e \in S$ for which $e \circ a= a \circ e=a$ for all $a \in S$.

	The classes of kernels $(1), (2), (3) (4)$ have something in common, those kernels satisfy the following property:

	\begin{defn}\label{unitarily}Let $X$ be a topological space and $S=\{\phi_{\lambda}:X \to X , \lambda \in \Lambda\}$ be a family of continuous functions. We say that a continuous  matrix valued kernel   $K: X \times X \to M_{\ell}(\mathbb{C})$ is unitarily invariant the  family of functions $S$ if
		$$
		K(\phi_{\lambda}(x), \phi_{\lambda}(y))= K(x,y)
		$$
		for all $x,y \in X$ and $\lambda \in \Lambda$ (or $\phi_{\lambda} \in S$).
	\end{defn}
	We are assuming that the space $X$ has a topology and the kernel $K$ is continuous, only by its importance rather than a condition itself. Although at the above definition we did not make any assumption on the set $S$, without loss of generality we can assume that it is a semigroup  of continuous functions with a unity. Indeed the identity function $i:X \to X$ satisfies the required symmetry relation, and given two functions $\phi_{1}, \phi_{2} \in S$, then $\phi_{2}\phi_{1}$ satisfies
	$$
	K(x,y)=K(\phi_{1}(x), \phi_{1}(y))= K(\phi_{2}(\phi_{1}(x)), \phi_{2}(\phi_{1}(y))) , \quad x,y \in X. 
	$$
	
	If a  continuous matrix valued kernel $K$ is unitarily invariant by a semigroup of continuous functions with unity $S$ and is positive definite, we use the notation $K \in P(X,S, \mathbb{C}^{\ell})$. If in addition the kernel is strictly positive definite we use the notation $K \in P^{+}(X,S, \mathbb{C}^{\ell})$. Similarly, if  all of the scalar valued projections of a continuous matrix valued kernel   $K$ that is unitarily invariant by a semigroup of continuous functions  with unity $S$ are scalar valued positive definite kernels, we use the notation $K \in P_{proj}(X,S, \mathbb{C}^{\ell})$. Likewise if in addition all of the scalar valued projections of the kernel $K$ are scalar valued strictly positive definite kernels, we use the notation $K \in P^{+}_{proj}(X,S, \mathbb{C}^{\ell})$. When $\ell =1$ we use the simplified notation $P(X,S)$ and $P^{+}(X,S)$.
	
	We reemphasize that the inclusions
	\begin{equation}
		P(X,S, \mathbb{C}^{\ell}) \subset P_{proj}(X,S, \mathbb{C}^{\ell})
	\end{equation}
	\begin{equation}\label{subset}
		P^{+}(X,S, \mathbb{C}^{\ell}) \subset P^{+}_{proj}(X,S, \mathbb{C}^{\ell})
	\end{equation}
	always holds and that under  the conditions of Theorem \ref{principal} we show that the inclusion $P^{+}(X,S, \mathbb{C}^{\ell}) \subset P^{+}_{proj}(X,S, \mathbb{C}^{\ell})$ is strict.

	The following simple Lemma explains why we focus only on  $M_{2}(\mathbb{C})$ valued kernels.

	\begin{lem}\label{just2} Let $X$ be a topological space, $S$ a semigroup of continuous functions on $X$. Then  $P^{+}_{proj}(X,S,\mathbb{C}^{\ell})=P^{+}(X,S,\mathbb{C}^{\ell}) $ for some $\ell \geq 2$ if and only if  $P^{+}_{proj}(X,S,\mathbb{C}^{m})=P^{+}(X,S,\mathbb{C}^{m}) $  for every $2\leq m \leq \ell$.
	\end{lem}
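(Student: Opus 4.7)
The reverse implication is immediate: if the equality $P^{+}_{proj}(X,S,\mathbb{C}^{m})=P^{+}(X,S,\mathbb{C}^{m})$ holds for every $2\le m\le\ell$, specialize to $m=\ell$. For the forward implication, fix $m$ with $2\le m\le\ell$ and let $K \in P^{+}_{proj}(X,S,\mathbb{C}^m)$. My plan is to extend $K$ to an $M_\ell(\mathbb{C})$-valued kernel $\tilde{K}$ that still belongs to $P^{+}_{proj}$, invoke the hypothesis in $\mathbb{C}^\ell$, and then recover strict positive definiteness of $K$ by restricting to test vectors supported on the first $m$ coordinates.

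For the construction, I first extract a scalar filler: set $k(x,y):=\langle K(x,y)e_1,e_1\rangle$, which is a scalar projection of $K$ and hence lies in $P^{+}(X,S)$ (it inherits continuity and $S$-invariance from $K$). Then I define
$$
\tilde{K}(x,y)=\begin{pmatrix} K(x,y) & 0 \\ 0 & k(x,y)\,I_{\ell-m}\end{pmatrix},
$$
which is continuous, $M_\ell(\mathbb{C})$-valued, and $S$-invariant since both diagonal blocks are.

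To check $\tilde{K}\in P^{+}_{proj}(X,S,\mathbb{C}^\ell)$, decompose an arbitrary nonzero $v\in\mathbb{C}^\ell$ as $v=(w,u)\in\mathbb{C}^m\times\mathbb{C}^{\ell-m}$. The block-diagonal form gives
$$
\tilde{K}_v(x,y)=K_w(x,y)+\|u\|^2\,k(x,y).
$$
If $w\neq 0$ then $K_w$ is strictly positive definite and $\|u\|^2 k$ is positive semidefinite; if $w=0$ then $u\neq 0$ and $\|u\|^2 k$ is strictly positive definite. In either case $\tilde{K}_v$ is strictly positive definite. The hypothesis then gives $\tilde{K}\in P^{+}(X,S,\mathbb{C}^\ell)$, and strict positive definiteness of $K$ follows at once by plugging test vectors of the form $(w_\mu,0)$ into the $\tilde{K}$-quadratic form, since any nonzero tuple $(w_1,\dots,w_n)$ in $\mathbb{C}^m$ yields a nonzero tuple in $\mathbb{C}^\ell$.

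The argument is essentially elementary; the only point requiring attention is that the filler must itself produce strict projections even when the first $m$ coordinates of the test vector vanish, which is why it is taken as $k\cdot I_{\ell-m}$ with $k$ a genuine nonzero scalar projection of $K$, rather than a zero block. No other obstacle is expected.
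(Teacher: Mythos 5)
Your proposal is correct and follows essentially the same route as the paper: extend $K$ block-diagonally, filling the remaining diagonal entries with the scalar projection $\langle K e_{1}, e_{1}\rangle \in P^{+}(X,S)$ and zeros elsewhere, check that all projections of the extended kernel are strict, apply the hypothesis in $\mathbb{C}^{\ell}$, and restrict test vectors to the first $m$ coordinates. Your write-up merely supplies the case split ($w\neq 0$ versus $w=0$) that the paper leaves implicit.
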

	
	\begin{proof}
		The converse is immediate. If it holds for an $\ell$, then for any $2\leq m \leq \ell$ and kernel $K$ in $P^{+}_{proj}(X,S,\mathbb{C}^{m})$ we extend it to have $M_{\ell}(\mathbb{C})$ values by adding $0$ in the remaining  $\ell^{2}-m^{2}  -(\ell - m)$ off diagonal coordinates and on the remaining $ \ell-m$ diagonal coordinates we put  an arbitrary $P^{+}(X,S)$ kernel (like $\langle K e_{1}, e_{1}\rangle $). This new kernel is  in  $P^{+}_{proj}(X,S,\mathbb{C}^{\ell})$, hence in $P^{+}(X,S,\mathbb{C}^{\ell})$, and from that we obtain that the original kernel $K$ is in $P^{+}(X,S,\mathbb{C}^{m})$.
	\end{proof}

	Another  simple statement that we use, which is independent of the setting of kernels unitarily invariant by a semigroup of functions, but will simplify the construction of the counterexample is the following

	\begin{lem}	
		Let $K : X \times X  \to M_{2}(\mathbb{C})$ be a kernel. Then $K$ is (strictly) positive definite if and only if for every finite quantity of distinct points $x_{1}, x_{2}, \ldots , x_{n} \in X $, the following matrix in $M_{2n}(\mathbb{C})$
		$$ \left[\begin{array}{cc}
			\quad [K_{11}(x_{\mu}, x_{\nu})]_{\mu, \nu=1}^{n}  &  \quad [K_{12}(x_{\mu}, x_{\nu})]_{\mu,\nu=1}^{n} \\ \quad   [ K_{21}(x_{\mu}, x_{\nu}) ]_{\mu, \nu=1}^{n} & \quad  [K_{22}(x_{\mu}, x_{\nu})]_{\mu, \nu=1}^{n} 
		\end{array} \right], \quad
		$$	
		is positive semidefinite (definite).
	\end{lem}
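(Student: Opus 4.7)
The plan is to exhibit a permutation of coordinates that identifies the standard block matrix $[K(x_\mu,x_\nu)]_{\mu,\nu=1}^n \in M_{2n}(\mathbb{C})$ (written with $n\times n$ many $2\times 2$ blocks) with the rearranged block matrix displayed in the statement ($2\times 2$ many $n\times n$ blocks), and then invoke the fact that positive (semi)definiteness is preserved under unitary conjugation.

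First I would recall the standard equivalence: $K$ is positive definite (respectively strictly positive definite) in the sense of the introduction if and only if, for every collection of distinct points $x_1,\ldots,x_n \in X$, the block matrix $A_n:=[K(x_\mu,x_\nu)]_{\mu,\nu=1}^n$, seen as an element of $M_{2n}(\mathbb{C})$ by concatenating the $2\times 2$ blocks in the natural order, is positive semidefinite (respectively positive definite). Indeed, stacking $v_1,\ldots,v_n\in\mathbb{C}^2$ into the vector $w=(v_1^{(1)},v_1^{(2)},\ldots,v_n^{(1)},v_n^{(2)})\in\mathbb{C}^{2n}$ gives
$$ w^{*}A_n w \;=\; \sum_{\mu,\nu=1}^{n}\langle K(x_\mu,x_\nu)v_\mu,v_\nu\rangle, $$
and the assignment $w\leftrightarrow(v_1,\ldots,v_n)$ is a bijection with $w=0$ iff all $v_\mu=0$.

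Next I would introduce the permutation $\sigma$ of $\{1,2,\ldots,2n\}$ sending the index $2(\mu-1)+i$ to $(i-1)n+\mu$, for $i\in\{1,2\}$ and $\mu\in\{1,\ldots,n\}$; equivalently, this is the reshuffle that groups coordinates by component first and by point second, rather than the reverse. Let $P$ be the corresponding permutation matrix. A direct inspection of entries shows that the $(\,(i-1)n+\mu,\,(j-1)n+\nu\,)$ entry of $P^{*}A_n P$ equals the $(i,j)$ entry of $K(x_\mu,x_\nu)$, i.e.\ $K_{ij}(x_\mu,x_\nu)$. Hence $P^{*}A_n P$ is exactly the $M_{2n}(\mathbb{C})$ matrix written in the statement as a $2\times 2$ array of $n\times n$ blocks $[K_{ij}(x_\mu,x_\nu)]_{\mu,\nu=1}^{n}$.

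Finally, since $P$ is unitary, $A_n\succeq 0$ iff $P^{*}A_n P\succeq 0$, and $A_n\succ 0$ iff $P^{*}A_n P\succ 0$. Combining with the first step yields both the positive definite and the strictly positive definite versions of the statement. The whole proof is essentially bookkeeping, and the only thing to be careful about is the indexing of the permutation $\sigma$; there is no genuine analytic or algebraic obstacle, which is why the lemma is stated without the unitary invariance hypothesis that governs the rest of the paper.
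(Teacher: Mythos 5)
Your proposal is correct: the identification of the defining quadratic form with $w^{*}A_nw$ after stacking the vectors $v_\mu$, followed by conjugation with the permutation matrix that reorders coordinates ``by component first, by point second,'' is exactly the standard bookkeeping argument, and unitary (permutation) conjugation preserves both positive semidefiniteness and positive definiteness, so both halves of the equivalence follow. The paper states this lemma without proof, treating it as a simple standard fact (and uses the same coordinate reshuffle implicitly via the relabeled points $y_1,\ldots,y_{2n}$ in the proof of Theorem \ref{construcao}), so your argument supplies precisely the omitted verification; the only point to watch is that the $(\mu,\nu)$ versus $(\nu,\mu)$ placement of the blocks is immaterial because the kernel's Hermitian symmetry makes the two resulting matrices adjoints of one another.
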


	We denote by $Z(S)$ the center of the semigroup $S$, that is the abelian semigroup
	$$
	Z(S):=\{\phi \in S : \phi \psi = \psi \phi, \text{ for all } \psi \in S  \}.
	$$
	Note that $Z(S)$ is never empty, being the identity function $i : X \to X$ an example.
	
	\begin{thm}\label{construcao} Let $k \in P(X,S)$ and $\phi \in Z(S)$. The matrix valued kernel $K: X \times X \to M_{2}(\mathbb{C})$ given by
		$$
		K(x,y)=\left[K_{\mu\nu}\right]_{\mu,\nu=1}^2=\left[\begin{array}{cc}
			k(\phi(x),\phi(y)) & k(\phi(x),y) \\
			k(x, \phi(y)) & k(x,y)
		\end{array}\right], \quad x, y \in X,
		$$	
		belongs to $P(X,S,\mathbb{C}^{2})\setminus P^{+}(X,S,\mathbb{C}^{2})$. Also, if $P^{+}(X,S)$ is non empty then  all functions in $S$  are injective.
	\end{thm}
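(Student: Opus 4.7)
The plan is to verify the four sub-claims in turn; throughout I set $\psi_1 := \phi$ and $\psi_2 := \mathrm{id}_X$, so that $K_{ij}(x,y) = k(\psi_i(x), \psi_j(y))$. For the $S$-invariance of $K$, fix $\chi \in S$; since $\phi \in Z(S)$ both $\psi_1$ and $\psi_2$ commute with $\chi$, and the $S$-invariance of $k$ gives
\[
K_{ij}(\chi(x), \chi(y)) = k(\chi(\psi_i(x)), \chi(\psi_j(y))) = k(\psi_i(x), \psi_j(y)) = K_{ij}(x, y).
\]

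To check $K \in P(X, S, \mathbb{C}^2)$, I would apply the preceding lemma and prove that, for any distinct $x_1, \dots, x_n \in X$, the associated $2n \times 2n$ block matrix is positive semidefinite. Its $((i, \mu), (j, \nu))$-entry is $k(\psi_i(x_\mu), \psi_j(x_\nu))$, which is exactly the Gram matrix of the scalar kernel $k$ at the $2n$ (not necessarily distinct) points $\{\psi_i(x_\mu)\}$. Since $k \in P(X, S)$, any such Gram matrix is positive semidefinite; one simply collapses coefficients over coinciding points to reduce to the hypothesis on distinct points. Hence $K \in P(X, S, \mathbb{C}^2)$.

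To show $K \notin P^+(X, S, \mathbb{C}^2)$ I would split cases. If $\phi = \mathrm{id}_X$, then every entry of $K(x, y)$ equals $k(x, y)$, so $K(x, y)$ is rank one and the choice $n = 1$, $v_1 = (1, -1)^T$ annihilates the form. Otherwise, pick $x_1 \in X$ with $\phi(x_1) \neq x_1$ and set $x_2 := \phi(x_1)$; these are distinct, but among the four points $\psi_i(x_\mu)$ one has the coincidence $\psi_1(x_1) = \phi(x_1) = x_2 = \psi_2(x_2)$. Hence two rows of the $4 \times 4$ block matrix coincide, the matrix is singular, and any nonzero vector in its kernel translates back via the preceding lemma to a nonzero pair $(v_1, v_2)$ making $\sum_{\mu, \nu} \langle K(x_\mu, x_\nu) v_\mu, v_\nu \rangle$ vanish.

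For the final statement, I argue by contraposition: if $\psi \in S$ fails to be injective, pick $x \neq y$ with $\psi(x) = \psi(y)$; for any $k \in P(X, S)$ the $\psi$-invariance gives $k(x, x) = k(\psi(x), \psi(x)) = k(\psi(x), \psi(y)) = k(x, y)$ and symmetrically $k(y, y) = k(x, y)$, so the $2 \times 2$ Gram matrix of $k$ at $\{x, y\}$ is constant and hence singular, forcing $k \notin P^+(X, S)$. The only step requiring genuine insight is the Gram-matrix identification of the block matrix in step two; once that correspondence is in place the rest follows from elementary linear algebra.
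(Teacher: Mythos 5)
Your proposal is correct and follows essentially the same route as the paper: the $2n\times 2n$ block matrix is identified with the Gram matrix of $k$ at the doubled (possibly coinciding) points $\phi(x_\mu),x_\mu$, non-strictness comes from a forced coincidence among these points (via $x_2=\phi(x_1)$, or trivially when $\phi=\mathrm{id}$), and the injectivity claim follows from the constant $2\times 2$ Gram matrix produced by a non-injective $\psi$. The only step you omit is the (trivial) remark that $K$ is continuous, which is needed since membership in $P(X,S,\mathbb{C}^2)$ presupposes continuity.
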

	\begin{proof}The kernel $K$ is continuous because both the kernel $k$ and the function $\phi$ are continuous. The kernel $K$ is invariant by the semigroup $S$ because if $x,y \in X$ and $\psi \in S$, we have that
		$$
		K_{12}(\psi(x), \psi(y))= k(\phi(\psi(x)), \psi(y))=k(\psi(\phi(x)), \psi(y))=k(\phi(x), y)= K_{12}(x, y).
		$$ 	
		In the second equality we have used the fact that $\phi \in Z(S)$ and in the third one that $k$ is invariant by the semigroup $S$. Similarly for the   other 3 entries of $K$.\ Next, let us verify the positive definiteness of $K$. For distinct points $x_{1}, \ldots , x_{n} \in X$, we have that
		$$
		\left [ \begin{array}{cc}
			\left[k( \phi(x_{\mu}),  \phi(x_{\nu}) )\right]_{\mu,\nu=1}^n &  \left[k( \phi(x_{\mu}),   x_{\nu})\right]_{\mu,\nu=1}^n \\
			\left[k( x_{\mu},  \phi(x_{\nu}))\right]_{\mu,\nu=1}^n &  \left[k( x_{\mu},  x_{\nu})\right]_{\mu,\nu=1}^n
		\end{array}\right ]=\left[k( y_{i}, y_{j})\right]_{i,j=1}^{2n},
		$$
		where
		$$
		y_{i}=\left\{\begin{array}{ll}
			\phi(x_{i})  & \mbox{if $i=1,\ldots,n$}\\
			x_{i-n} & \mbox{if $i=n+1,\ldots,2n$}
		\end{array} \right.
		$$
  
		Thus, the positive definiteness of $K$ follows from that of $k$. In order to see that $K$ is not strictly positive definite,
		we consider two cases. Given an arbitrary $x \in X$, either  $ \phi(x)=x $, and then
		$$K( x,  x)=\left[\begin{array}{cc}
			k( x,  x) & k( x,  x) \\
			k( x,  x) & k( x,  x)
		\end{array}\right]
		$$
		and $K$ is obviously not strictly positive definite, or  $\phi(x) \neq x $, defining $x_{1}=x$ and $x_{2}= \phi(x)$, we have that
		$$
		\left[K( x_{\mu}, x_{\nu})\right]_{\mu,\nu=1}^2 \simeq \left[\begin{array}{cccc}
			k(\phi(x),\phi(x)) & k(\phi(x), \phi^{2}(x)) & k(\phi(x),x) & k(\phi(x),\phi(x) \\
			k(\phi^{2}(x),\phi(x) ) & k(\phi^{2}(x) , \phi^{2}(x)) & k(\phi^{2}(x), x) & k(\phi^{2}(x),\phi(x)) \\
			k(x,\phi(x)) & k(x,\phi^{2}(x)) & k(x,x) & k(x,\phi(x)) \\
			K(\phi(x),\phi(x)) & k(\phi(x),\phi^{2}(x)) & k(\phi(x),x) & k(\phi(x),\phi(x))
		\end{array}\right]
		$$
		is a matrix of rank $<4$.\\
		For the second part, if by an absurd there is a  $k \in P^{+}(X,S, \mathbb{C})$ but there is a $\phi \in S$ that is non injective, taking distinct $x_{1}, x_{2} \in X$ such that $\phi(x_{1})= \phi(x_{2})=z$, the matrix
		$$
		\left[k( x_{\mu}, x_{\nu})\right]_{\mu,\nu=1}^{2} = \left[\begin{array}{cc}
			k(x_{1},x_{1}) & k(x_{1}, x_{2})\\
			k(x_{2},x_{1}) & k(x_{2}, x_{2}) 
		\end{array}\right] = 
		\left[\begin{array}{cc}
			k(z,z) & k(z,z)\\
			k(z,z) & k(z,z) 
		\end{array}\right]
		$$
		is non invertible, which is an absurd.	
	\end{proof}

	To proceed we introduce a new definition, which often appears in dynamical system theory.
	
	\begin{defn} Let $X$ be a topological space. A function $\phi$ in a semigroup $S$ of functions in $C(X,X)$, is said to  be  aperiodic if  $\phi^{m}(x) \neq x$ for all $x \in X$ and  $m\in \mathbb{N}$.  
	\end{defn}
	
	Note that the existence of an aperiodic function is more restrictive than demanding that the semigroup $S$ is non torsion.

	The next lemma and its corollary will help us understand how the set 
	$$
	\{y_{1}, y_{2}, \ldots , y_{2n} \}:=\{\phi(x_{1}), \phi(x_{2}), \ldots \phi(x_{n})  ,x_{1}, x_{2}, \ldots x_{n}   \}.
	$$
	that appeared on the proof of Theorem \ref{construcao} behaves if $\phi$ is an aperiodic injective function.

	\begin{lem}\label{quantidade}  Let $\phi \in C(X,X)$ be an aperiodic injective function. For any distinct points  $x_{1}, x_{2}, \ldots x_{n} \in X$, consider the set  $F:=\{ \mu \in  \{1,2,\ldots , n\}, \quad \phi (x_{\mu}) \in \{x_{1}, x_{2}, \ldots x_{n}\}  \} $ and the function $\tau: F\to \{1,2,\ldots , n\}$ for which $x_{\tau(\mu)}= \phi (x_{\mu})$. Then the set $F$ and the function  $\tau $ are well defined and satisfy :\\
		$(i)$ $\tau$ is injective\\
		$(ii)$ $0\leq |F|\leq n-1$.\\	
		$(iii)$ If $\mu \in F$ there exists $q\in \mathbb{N}$, which depends on $\mu$, where $\mu, \tau(\mu), \tau^{2}(\mu), \dots \tau^{q}(\mu) \in F$, but $\tau^{q+1}(\mu)\notin F$.
		
	\end{lem}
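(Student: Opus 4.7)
The plan is to verify the three claims in order, each reducing to a short argument that combines injectivity of $\phi$ with aperiodicity. First I would observe that $F$ and $\tau$ are well-defined because the points $x_1, \ldots, x_n$ are distinct: for each $\mu \in F$ there is exactly one index, which we call $\tau(\mu)$, satisfying $x_{\tau(\mu)} = \phi(x_\mu)$. Claim (i) is then immediate from injectivity of $\phi$: if $\tau(\mu) = \tau(\nu)$ then $\phi(x_\mu) = \phi(x_\nu)$, so $x_\mu = x_\nu$, giving $\mu = \nu$.

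For (ii), I would argue by contradiction. If $|F| = n$, then $\tau$ is an injective selfmap of $\{1, \ldots, n\}$, hence a bijection. Picking any $\mu$ and iterating, pigeonhole forces $\tau^m(\mu) = \mu$ for some $m \geq 1$. Translating via the identity $x_{\tau^k(\mu)} = \phi^k(x_\mu)$, which follows by an easy induction from the defining relation for $\tau$ as long as the iterates stay in $F$, this yields $\phi^m(x_\mu) = x_\mu$, contradicting aperiodicity.

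Part (iii) rests on the same dichotomy. Starting at $\mu \in F$, I would inductively follow the forward orbit $\mu, \tau(\mu), \tau^2(\mu), \ldots$ for as long as it remains in $F$. Either the orbit eventually exits $F$ at some first step, producing the required $q$ with $\tau^{q+1}(\mu) \notin F$, or the whole forward orbit is contained in $F$. In the latter case, finiteness of $F$ forces a repetition $\tau^j(\mu) = \tau^k(\mu)$ with $j < k$, and applying (i) repeatedly lets me cancel to obtain $\tau^{k-j}(\mu) = \mu$, whence $\phi^{k-j}(x_\mu) = x_\mu$, again contradicting aperiodicity.

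I do not foresee a serious obstacle; the only delicate point is keeping the bookkeeping between iterates of $\tau$ on indices and iterates of $\phi$ on the points $x_j$ precise enough that the jump from $\tau^m(\mu) = \mu$ to $\phi^m(x_\mu) = x_\mu$ is airtight, since this transition is where aperiodicity of $\phi$ is invoked in both (ii) and (iii).
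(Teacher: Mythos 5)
Your proposal is correct and follows essentially the same route as the paper: well-definedness and (i) from distinctness of the points and injectivity of $\phi$, and (ii), (iii) by contradiction, using finiteness (pigeonhole) to force a cycle of $\tau$ and then translating iterates of $\tau$ into iterates of $\phi$ to contradict aperiodicity. The only cosmetic differences are that the paper obtains $\tau^{M}=I_{n}$ from finiteness of the permutation group rather than pigeonhole on a single orbit, and in (iii) it reads off the contradiction at the repeated point directly instead of cancelling back to $\mu$ via injectivity of $\tau$.
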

	
	\begin{proof} The function $\tau$ is well defined because the points $x_{1}, \ldots, x_{n}$ are distinct and is injective because $\phi$ is an injective function.\\
		In order to prove $(ii)$, suppose by contradiction that $|F|=n$. In particular, the function $\tau:\{1,2,\ldots , n\} \to \{1,2,\ldots , n\} $ is bijective. Since the group of bijective functions over $\{1,2,\ldots , n\}$ is finite, there will be an $M \in \mathbb{N}$ where $\tau^{M} =I_{n}$, the identity function over $\{1,2,\ldots , n\}$. Note that $M \neq 1$ because $\phi$ is aperiodic, so $M\geq 2$, but 
		$$ x_{1}= x_{\tau^{M}(1)}= \phi(x_{\tau^{M-1}(1)})= \ldots =  \phi^{M-1}(x_{\tau}(1))= \phi^{M}(x_{1})$$
		but then $\phi^{M}(x_{1})= x_{1}$, which is an absurd because $\phi$ is aperiodic.\\
		The proof of $(iii)$ is similar to the one we presented in $(ii)$. Suppose by contradiction that exists $\mu \in F$ for which  $\tau^{q}(\mu) \in F$ for all $q \in N$. Since the set $\{x_{1}, \ldots , x_{n}\}$ is finite, there will exist natural numbers $p < q$, where $x_{\tau^{p}(\mu)}= x_{\tau^{q}(\mu)}$. But,  
		$$
		\phi^{q-p}(x_{\tau^{p}(\mu)})= \phi^{q-p}(\phi^{p}(x_{\mu}))=  \phi^{q}(x_{\mu})= x_{\tau^{q}(\mu)}=x_{\tau^{p}}(\mu)
		$$ 
		which is an absurd because $\phi$ is aperiodic.	
		
	\end{proof}

	We restate Lemma \ref{quantidade} in a more friendly way for the proof of our main result. The proof is omitted. 	
	
	\begin{cor}\label{quantidade2} Under the same hypotheses of Lemma \ref{quantidade}, there exists $m \in  \mathbb{Z}_{+}$ and $p \in \mathbb{N}$, with $m=|F|$ and $p=n-|F|$, $m+2p$ distinct points $ z_{\alpha} \in X$, such that:\\
		$(i)$ $\{\phi(x_{1}), \phi(x_{2}), \ldots \phi(x_{n})  ,x_{1}, x_{2}, \ldots x_{n}   \}=\{  z_{1}, \ldots, z_{m},  z_{m+1},\ldots,   z_{m+p},  z_{m+p+1},\ldots,  z_{m+2p} \}$.\\
		$(ii)$  $\{ z_{1},\ldots,  z_{m} \}:= \{ \phi(x_{\mu}), \mu \in F  \} = \{ x_{\tau(\mu)}, \mu \in F  \} $.\\
		$(iii)$ $\{ z_{m+1},\ldots,  z_{m+p} \}:= \{ \phi(x_{\eta}), \eta \notin F   \} $.\\
		$(iv)$  $\{ z_{m+p+1},\ldots,  z_{m+2p} \}:= \{ x_{\mu}, \mu \notin \tau(F)  \} $.
	\end{cor}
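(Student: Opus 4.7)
The plan is to unpack Lemma \ref{quantidade} by explicitly labeling the distinct elements of the set $\{\phi(x_1), \ldots, \phi(x_n), x_1, \ldots, x_n\}$ in the three consecutive blocks prescribed by (ii), (iii), (iv), and then read off (i) from this partition. The injectivity of $\phi$ (hence of $\tau$, by Lemma \ref{quantidade}(i)) is what guarantees the sizes $m$, $p$, $p$ come out as claimed.

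First, for block (ii) I take the set $\{\phi(x_\mu) : \mu \in F\}$. By definition of $\tau$ this equals $\{x_{\tau(\mu)} : \mu \in F\}$, and by injectivity of $\tau$ it has exactly $|F| = m$ elements, which I label $z_1, \ldots, z_m$ in any order. For block (iii) I take $\{\phi(x_\eta) : \eta \notin F\}$; there are $n - m = p$ such indices, they yield $p$ distinct points by injectivity of $\phi$, and by the very definition of $F$ none of them lies in $\{x_1, \ldots, x_n\}$. I label these $z_{m+1}, \ldots, z_{m+p}$. For block (iv) I take $\{x_\mu : \mu \notin \tau(F)\}$, which, since $|\tau(F)| = m$, consists of $n - m = p$ distinct points, labeled $z_{m+p+1}, \ldots, z_{m+2p}$.

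It remains to verify that the three blocks are pairwise disjoint and that their union is the original set. Block (iii) lies entirely outside $\{x_1, \ldots, x_n\}$, whereas blocks (ii) and (iv) lie inside it, so (iii) is disjoint from the other two; blocks (ii) and (iv) correspond to indices in $\tau(F)$ and outside $\tau(F)$ respectively, so they too are disjoint. For the union, every $\phi(x_i)$ belongs to (ii) or (iii) according as $i \in F$ or $i \notin F$, and every $x_j$ belongs to (ii) (via the equality $\{x_{\tau(\mu)} : \mu \in F\} = \{\phi(x_\mu) : \mu \in F\}$) or to (iv) according as $j \in \tau(F)$ or $j \notin \tau(F)$. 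This gives (i), and the total count $m + 2p = 2n - |F|$ matches the cardinality of the original union.

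The main obstacle is purely bookkeeping: one must keep straight that the equality in (ii) is precisely what allows the $x_{\tau(\mu)}$'s to be absorbed into the $\phi$-block rather than appearing as a separate $x$-block, and that both parts (i) and (ii) of Lemma \ref{quantidade} get used, namely the injectivity of $\tau$ to pin down all cardinalities, and the bound $|F|\leq n-1$ to ensure $p \geq 1$, so that $p \in \mathbb{N}$ as stated.
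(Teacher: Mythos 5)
Your proof is correct: the paper omits the argument as routine, and your bookkeeping (labeling the three blocks, using injectivity of $\phi$ and $\tau$ for the cardinalities, the definition of $F$ for disjointness of the middle block, and $|F|\leq n-1$ to guarantee $p\geq 1$) is exactly the intended verification. Nothing further is needed.
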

	
	We just emphasize that the sets in $(iii)$ and $(iv)$ have the same number of elements because $\tau$ is an injective function. 
	
	Now we are able to prove the main result of this paper.

	\begin{thm} \label{principal}   Suppose that $S$ is a semigroup of functions in $C(X,X)$,  $\phi \in Z(S)$ is an aperiodic function and that  $P^{+}(X,S)$ is non empty. Then, for any $\ell \geq 2$
		$$
		P^{+}(X,S, \mathbb{C}^{\ell}) \subsetneq  P^{+}_{Proj}(X,S, \mathbb{C}^{\ell})
		$$
	\end{thm}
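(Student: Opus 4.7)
The plan is to take the very kernel $K$ produced in Theorem \ref{construcao} and verify that, once we assume $\phi$ aperiodic, all of its scalar projections become strictly positive definite, which combined with Theorem \ref{construcao} already separates $K$ from $P^{+}(X,S,\mathbb{C}^{2})$. By Lemma \ref{just2}, strictness of the inclusion at $\ell = 2$ propagates to every $\ell \geq 2$ (its biconditional shows that if equality held at some $\ell \geq 2$ it would also hold at $\ell = 2$). So fix any $k \in P^{+}(X,S)$ --- which exists by hypothesis --- and recall from Theorem \ref{construcao} that every element of $S$, hence $\phi$ itself, is injective.

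To show each projection $K_{v}$ is strictly positive definite, I would pass to the reproducing kernel Hilbert space $H$ of $k$, writing $k_{x}:=k(\cdot, x)$ so that $\langle k_{y}, k_{x}\rangle_{H} = k(x,y)$. A direct bilinear expansion then yields the identity
\[
K_{v}(x,y) \;=\; \langle \xi_{v}(y),\, \xi_{v}(x)\rangle_{H}, \qquad \xi_{v}(x) \;:=\; v_{1}\, k_{\phi(x)} + v_{2}\, k_{x},
\]
so that $K_{v}$ is strictly positive definite iff $\xi_{v}(x_{1}),\ldots,\xi_{v}(x_{n})$ are linearly independent in $H$ for every choice of distinct $x_{1},\ldots,x_{n}\in X$. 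This is what I would aim to verify for every nonzero $v = (v_{1},v_{2}) \in \mathbb{C}^{2}$.

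Suppose $\sum_{\nu=1}^{n} u_{\nu}\xi_{v}(x_{\nu}) = 0$. Using Corollary \ref{quantidade2}, I would enumerate $\{\phi(x_{1}),\ldots,\phi(x_{n}),x_{1},\ldots,x_{n}\}$ as the $m+2p$ distinct points $z_{1},\ldots,z_{m+2p}$, rewrite the sum as a linear combination of $k_{z_{1}},\ldots,k_{z_{m+2p}}$, and invoke strict positive definiteness of $k$ (equivalently, linear independence of $\{k_{z_{\alpha}}\}$) to conclude that each coefficient vanishes. The resulting three systems of equations are
\[
v_{1}u_{\mu} + v_{2}u_{\tau(\mu)} = 0 \ \ (\mu \in F), \qquad v_{1}u_{\eta} = 0 \ \ (\eta \notin F), \qquad v_{2}u_{\mu} = 0 \ \ (\mu \notin \tau(F)).
\]
The degenerate cases $v_{1}=0$ or $v_{2}=0$ are immediate from injectivity of $\phi$ and linear independence of distinct $k_{z}$'s. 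For $v_{1},v_{2}\neq 0$, the last two systems annihilate $u_{\nu}$ whenever $\nu \notin F \cap \tau(F)$, and the remaining indices must be handled via the chain structure of $\tau$. This is the main subtlety and precisely where aperiodicity of $\phi$ is consumed: Lemma \ref{quantidade}\,(iii) guarantees that for any $\mu \in F$ the orbit $\mu, \tau(\mu), \tau^{2}(\mu), \ldots$ exits $F$ after finitely many steps, say at $\tau^{q+1}(\mu) \notin F$; back-substituting through the overlap equations starting from $u_{\tau^{q+1}(\mu)} = 0$ then kills $u_{\tau^{q}(\mu)}, u_{\tau^{q-1}(\mu)}, \ldots, u_{\mu}$ successively. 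Without aperiodicity the chain could close into a cycle, blocking this back-substitution; Lemma \ref{quantidade}\,(iii) makes the induction essentially automatic once the equations are set up. Once $u = 0$ in every case, linear independence is established, and the strict containment follows from Theorem \ref{construcao}.
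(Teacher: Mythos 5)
Your proposal is correct and follows essentially the same route as the paper: reduce to $\ell=2$ via Lemma \ref{just2}, take the kernel of Theorem \ref{construcao}, convert strictness of each projection $K_{v}$ into the vanishing of the coefficients attached to the distinct points supplied by Corollary \ref{quantidade2}, and then solve the three resulting systems by back-substitution along the $\tau$-chains of Lemma \ref{quantidade}\,(iii), exactly where aperiodicity is used. Your RKHS/feature-map phrasing ($K_{v}(x,y)=\langle \xi_{v}(y),\xi_{v}(x)\rangle_{H}$ with strictness equivalent to linear independence of the sections $k_{z_{\alpha}}$) is only a cosmetic repackaging of the paper's direct quadratic-form computation.
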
	
	
	\begin{proof}By Lemma \ref{just2}, we only have to show an example for $\ell =2$.\\  Let $k \in P^{+}(X,S)$ and  $\phi\in Z(S)$ for which $\phi^{m}(x)\neq x $ for all $m \in \mathbb{N}$ and $x \in X$. Theorem \ref{construcao}
		asserts that the kernel $K: X \times X \to M_{2}(\mathbb{C})$ given by
		$$
		K( x, y)=\left[\begin{array}{cc}
			k( \phi(x), \phi(y)) & k( \phi(x), y) \\
			k( x,  \phi(y)) & k( x, y)
		\end{array}\right], \quad  x,  y \in X,
		$$	
		belongs to $P(X,S,\mathbb{C}^{2})\setminus P^{+}(X,S,\mathbb{C}^{2})$ and that $\phi$ is injective. Next, we show that $K \in 	P^{+}_{Proj}(X,S, \mathbb{C}^{2})$.
		Fix $v \in \mathbb{C}^2 \setminus\{(0,0)\}$ and arbitrary distinct points $ x_{1}, \ldots,  x_{n}$ in $X$.\ Let $ y_{1},\ldots,  y_{2n}$ as in the proof of Theorem \ref{construcao}.\ We will show $[K_v( x_{\mu}, x_{\nu})]_{\mu,\nu=1}^n$ is positive definite.\ If $c_1, \ldots, c_n \in \mathbb{C}$ and 
		$$
		0=\sum_{\mu,\nu=1}^n c_\mu\overline{c_\nu}K_v( x_{\mu}, x_{\nu}),
		$$
		then,  we have that
		\begin{align*}
  &\sum_{\mu,\nu=1}^{n} c_\mu\overline{c_\nu}[v_{1}\overline{v_{1}}k( \phi(x_{\mu}),\phi(  x_{\nu})) + v_{1}\overline{v_{2}}k( \phi(x_{\mu}),  x_{\nu})+v_{2}\overline{v_{1}}k( x_{\mu},  \phi(x_{\nu}))+v_{2}\overline{v_{2}}k( x_{\mu},  x_{\nu}) ]\\
  &=\sum_{i,j=1}^{2n} d_{i}\overline{d_{j}}k(y_{i}, y_{j})=0
		\end{align*}
		where $d_{i} = v_{1}c_{i}$ and $d_{i+n}= v_{2}c_{i}$ for $1\leq i \leq n$. Taking into account Lemma \ref{quantidade} and Corollary \ref{quantidade2}, we can rewrite this double sum as:\\

		$$ 
		\sum_{\alpha,\beta=1}^{m+2p} e_{\alpha}\overline{e_{\beta}}k( z_{\alpha},  z_{\beta})=0,
		$$
		where: \\
		$(I)$ $1 \leq\alpha \leq m $,  $e_{\alpha} = v_{1}c_{\mu} + v_{2}c_{\tau(\mu)}$, for some  $\mu \in F$.\\
		$(II)$ $m+1 \leq\alpha \leq m+p $,  $e_{\alpha} = v_{1}c_{\mu} $, for some  $\mu \notin F$.\\
		$(III)$  $m+p+1 \leq\alpha \leq m +2p$,  $e_{\alpha} = v_{2}c_{\mu}$, for some  $\mu \notin \tau( F)$.
		
		But the kernel $k$ is strict and the $m+2p$ points $z_{\alpha}$ are distinct, so $e_{\alpha}=0$ for all $1\leq \alpha \leq m+2p$. We divide the proof in the case that $v_{1} \neq 0$ and the case that $v_{1} =0$.\\
		If $v_{1} \neq 0$, since $p\geq 1$, equation $(II)$ implies that $c_{\mu}=0$ for all $\mu \notin F$. If $\mu \in F$, by the relation $(iv)$ in Lemma \ref{quantidade}, there exist $q \in \mathbb{Z}_{+}$, which depends on $\mu$, for which  $\mu, \tau(\mu), \tau^{2}(\mu), \dots \tau^{q}(\mu) \in F$, but $\tau^{q+1}\notin F$. In particular, equation $(I)$ implies that $$
		0=v_{1}c_{\tau^{q}(\mu)}+v_{2}c_{\tau^{q+1}(\mu)}=v_{1}c_{\tau^{q}(\mu)},
		$$ so $c_{\tau^{q}(\mu)}=0$. In case $q \geq 1$, we use equation $(I)$ again in order to obtain 
		$$
		0=v_{1}c_{\tau^{q-1}(\mu)}+v_{2}c_{\tau^{q}(\mu)}=v_{1}c_{\tau^{q-1}(\mu)},
		$$
		so $c_{\tau^{q-1}(\mu)}=0$. After finitely many similar steps we conclude that $c_{\mu}=0$, and so, the kernel $K_{v}$ is strict.\\
		If $v_{1}=0$, then $v_{ 2} \neq 0$. Equation $(I)$ imply that $0=v_{2}c_{\eta}$, for all $\eta\in  \tau(F)$ while equation $(III)$ imply that  $\eta \notin \tau(F)$, and again, $c_{\eta} =0$ for all $\eta$ and the kernel $K_{v}$ is strict.   
	\end{proof}

	\begin{rem}If the function $\phi\in Z(S)$ in Theorem \ref{construcao}  is not aperiodic or is not injective, then it can be proved that $K \notin 	P^{+}_{Proj}(X,S, \mathbb{C}^{2})$.  
	\end{rem}

	The fact that $\phi \in Z(S)$ is an essential requirement. This can be seen in a very familiar example, the set of kernels defined on $\mathbb{R}$ and invariant by the group of functions $\psi_{i,w}(x):=(-1)^{i}x +w$, $i \in \{0,1\}$ and $w \in \mathbb{R}$, which by the comments made at the introduction, are the radial kernels in $\mathbb{R}^{1}$. The center of this group only contains the identity function $\psi_{0,0}$. Furthermore, fixed a function $\psi_{i,w} \in S$, $w \neq 0$, if a kernel $k:\mathbb{R}\times \mathbb{R} \to \mathbb{C} \in P(\mathbb{R},S)$ is such that
	$$ 
	k(x, \psi_{i,w}(y) )= k(\phi(x), \psi_{i,w}(\phi(y)) )
	$$
	for all $\phi \in S$ and $x,y \in \mathbb{R}$ (hence the kernel in Theorem \ref{construcao} would be well defined even if $\psi_{i,w}$ is not in $Z(S)$), then this  kernel is a  nonnegative constant function.

	\subsection{Abelian groups}

	In this subsection we completely characterize the locally compact Abelian groups $G$ together with the group of functions $S=\{ \phi_{g}:G \to G$, $\phi(x)=gx$, $g \in G\}$, for which $P_{proj}^{+}(G,S, \mathbb{C}^{\ell})= P^{+}(G,S,  \mathbb{C}^{\ell} ) $, for  any $\ell \in \mathbb{N}$. Those kernels are usually called in the literature as translational invariant. It is worth mentioning that  $P_{proj}(G,S, \mathbb{C}^{\ell})= P(G,S, \mathbb{C}^{\ell} ) $, for any $\ell \in \mathbb{N}$ and every locally compact Abelian group $G$, this result   is a consequence of  \cite{neb}.  
	
	Except for the finite Abelian groups, we will not need  the complete description of the positive definite kernels $P(G,S )$ or the harmonic analysis on locally compact Abelian groups. The interested reader may look at \cite{rudin} or \cite{van2009introduction} for more information on them.

	In order to prove the characterization, first note that all functions $\phi_{g} \in S$ are injective and $N(S)=S$. Moreover, $\phi^{m}_{g}(x)= g^{m}x$, so it exists a aperiodic  function  $\phi_{g}$ in  $S$  if and only if the group $G$ is non torsion. In order to use a notation more common in the literature to present this type of kernel, we omit the $S$ term. 
	
	A direct application of Theorem \ref{principal} leads to the following result:    
	
	\begin{cor} \label{principalabelian} Let $G$ be an Abelian locally compact group that is non torsion and for which $P(G, \mathbb{C})$ is non empty. Then
		$$
		P^{+}_{Proj}(G, \mathbb{C}^{\ell}) \subsetneq P^{+}(G, \mathbb{C}^{\ell}),
		$$
		for any $\ell \geq 2$.
	\end{cor}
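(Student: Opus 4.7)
The plan is to recognize that this corollary is essentially a direct specialization of Theorem \ref{principal} to the group-theoretic setting, where $X=G$ and $S=\{\phi_{g}: g \in G\}$ with $\phi_{g}(x)=gx$. All the substantive work has already been done in the general abstract framework, so the task is to verify that each hypothesis of Theorem \ref{principal} holds in this setting.

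First I would check that $S$ is a semigroup of continuous functions: since multiplication in a topological group is continuous, each $\phi_{g}$ is continuous, and $\phi_{g}\circ \phi_{h}=\phi_{gh}$ shows that $S$ is closed under composition (in fact a group under composition, with unity $\phi_{e}$). Next I would identify the center: because $G$ is abelian, $\phi_{g}\circ\phi_{h}=\phi_{gh}=\phi_{hg}=\phi_{h}\circ\phi_{g}$ for all $g,h\in G$, so $Z(S)=S$. In particular, any choice of $\phi_{g}$ lies in $Z(S)$, which is exactly the flexibility we need.

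To produce an aperiodic element of $Z(S)$, I would invoke the non-torsion hypothesis on $G$: there exists $g\in G$ such that $g^{m}\neq e$ for every $m\in\mathbb{N}$. Then $\phi_{g}^{m}(x)=g^{m}x\neq x$ for every $x\in G$ and every $m\in\mathbb{N}$, so $\phi_{g}\in Z(S)$ is aperiodic in the sense of the definition preceding Lemma \ref{quantidade}. Combined with the hypothesis that $P^{+}(G,S)$ (understood as the strictly positive definite translation invariant kernels on $G$) is non empty, all conditions of Theorem \ref{principal} are met, and the strict inclusion follows at once for every $\ell \geq 2$.

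The only delicate point I foresee is interpreting the non-emptiness hypothesis: one must confirm that $P^{+}(G,S)$ contains a scalar kernel, since the general Theorem \ref{principal} requires a strict (not merely positive) kernel to build the counterexample. In the LCA setting this is standard — one can take a probability measure on the dual group $\widehat{G}$ whose support separates the points of $G$, and its Fourier transform is a strictly positive definite translation invariant function — so the hypothesis is benign; but it is worth flagging, since it is the single nontrivial input distinguishing this corollary from a purely formal consequence of Theorem \ref{principal}.
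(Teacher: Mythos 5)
Your proof is correct and follows essentially the same route as the paper: the corollary is obtained by specializing Theorem \ref{principal} to $S=\{\phi_g : g\in G\}$, noting that commutativity gives $Z(S)=S$, that $\phi_g^m(x)=g^m x$ makes $\phi_g$ aperiodic precisely when $g$ has infinite order (non-torsion hypothesis), and that the non-emptiness hypothesis (read, as you do, as $P^{+}(G,\mathbb{C})\neq\emptyset$) supplies the strict scalar kernel. One caveat on your closing aside: a measure on $\widehat{G}$ whose support merely separates points of $G$ need not give a strictly positive definite function (a single character with dense image separates points of $\mathbb{Z}$ yet yields a rank-one kernel), but this does not affect the argument, since strictness of some scalar kernel is taken as a hypothesis rather than constructed.
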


	-The $d$-dimensional torus $T_d$, defined as
	$$
	T_d:=\{x\in \mathbb{R}^{d}: -\pi \leq x_j < \pi; j=1,2,\ldots, d\},
	$$
	is a locally compact Abelian group and  is non torsion. There exist strictly positive definite  translational invariant kernels on it like
	$$
	(x,y) \in T_d \times T_d \to \prod_{m=1}^{d} \frac{2}{2-e^{i(x_{m}-y_{m})}}= \sum_{\alpha \in \mathbb{Z}^{d}_{+}}\frac{1}{2^{|\alpha|}}e^{-i\alpha x}\overline{e^{-i\alpha y}} \in \mathbb{C}. 
	$$
	The complete characterization of $P(T_d, \mathbb{C})$ can be found in \cite{shapiro}, while the characterization for $P^{+}(T_d, \mathbb{C})$ was given in \cite{emonds2011strictly}.

	In \cite{emonds2011strictly}, it was also proved a very interesting result, if $G$ is compact then  $P^{+}(G , \mathbb{C})$  is non empty if and only if $G$ is metrizable.

	-The $d$ dimensional Euclidean space $\mathbb{R}^{d}$ is also non torsion (any nonzero element is an example), there exists a strictly positive definite translation invariant kernel on it, like the Gaussian kernel
	$$
	(x,y) \in \mathbb{R}^{d} \times \mathbb{R}^{d} \to e^{- \| x - y \|^{2} } 
	$$
	and the kernel
	$$
	(x,y) \in \mathbb{R}^{d} \times \mathbb{R}^{d} \to \left[\begin{array}{cc}
		e^{- \| x - y \|^{2} } &e^{- \| x+z - y \|^{2} } \\
		e^{- \| x - y -z \|^{2} }  & e^{- \| x - y \|^{2} } 
	\end{array}\right], \quad z \in  \mathbb{R}^{d} \setminus{\{0\}} ,
	$$	
	belongs to $P^{+}_{Proj}(\mathbb{R}^{d} , \mathbb{C}^{2}) \setminus P^{+}(\mathbb{R}^{d} , \mathbb{C}^{2})$. However, as discussed in the introduction, if we add the hypotheses that the kernel is radial and $d\geq 2$, which means that is not only invariant by translation, but also on rotations, then the two classes are actually equal.

	Now, we aim to prove the case where $G$ is a torsion group. In that case, given a finite quantity of distinct points $x_{1}, \ldots , x_{n} \in G$, the group generated by those elements 
	$$
	\langle x_{1}, \ldots x_{n} \rangle=\{   x_{1}^{p_{1}}x_{2}^{p_{2}} \ldots x_{n}^{p_{n}}, \quad p_{1}, \ldots p_{n} \in \mathbb{N} \}
	$$
	is finite and Abelian. Given a kernel $K : G \times G \to  M_{\ell}(\mathbb{C})$, it is strictly positive definite if and only if the kernel $K$ restricted to the set $\langle x_{1}, \ldots x_{n} \rangle$ is strictly positive definite for every finite quantity of distinct points $x_{1}, \ldots x_{n} \in G$. But, by the fundamental Theorem of finite Abelian groups, a set like $\langle x_{1}, \ldots x_{n} \rangle$  is isomorphic to a direct sum of $\mathbb{Z}_{q}$ groups, that means
	$$
	\langle x_{1}, \ldots x_{n} \rangle \simeq \mathbb{Z}_{q_{1}}\times \ldots \times \mathbb{Z}_{q_{l}}.
	$$
	
	Now we prove a version of Corollary \ref{principalabelian} to a group like  $G=\mathbb{Z}_{q_{1}}\times \ldots \times \mathbb{Z}_{q_{l}}$, it turns out to be a completely different result, and the general case will follow from this case by the previous comments. Before that, we recall the characterizations of $P(G, \mathbb{C})$ and $P^{+}(G, \mathbb{C})$, which can be found in   \cite{emonds2011strictly}.

	\begin{thm}\label{audrey}  Let $G=\mathbb{Z}_{q_{1}}\times \ldots \times \mathbb{Z}_{q_{l}}$ and $\psi: G \to \mathbb{C}$. The kernel $K(x,y):= \psi(xy^{-1})$ is positive definite if and only if
		$$	
		\psi(x)= \sum_{g \in G} a_{g}\xi_{g}(x) ,
		$$	
		where $a_{g} \geq 0$  and  $\xi_{g}(x)= \prod_{r=1}^{l} e^{2\pi i g_{r}x_{r}/q_{r}}$ for all $g \in G$. The representation is unique, moreover, the kernel is strictly positive definite if and only if $a_{g} >0$ for all $g \in  G$.
	\end{thm}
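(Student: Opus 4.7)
The plan is to prove Theorem \ref{audrey} through the standard Fourier analysis on finite Abelian groups. The functions $\xi_g$, $g \in G$, are precisely the characters of $G = \mathbb{Z}_{q_1}\times\cdots\times\mathbb{Z}_{q_l}$, and they form an orthogonal basis of $\mathbb{C}^G$ with respect to the inner product $\langle f,h\rangle = \sum_{x\in G} f(x)\overline{h(x)}$, satisfying $\sum_{x\in G}\xi_g(x)\overline{\xi_h(x)} = |G|\,\delta_{g,h}$. Since each character is multiplicative and unimodular, $\xi_g(xy^{-1}) = \xi_g(x)\overline{\xi_g(y)}$, so the kernel $(x,y)\mapsto \xi_g(xy^{-1})$ is positive definite (it is the outer product kernel associated with $\xi_g$). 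Consequently, if $\psi = \sum_g a_g\xi_g$ with $a_g \geq 0$, the kernel $K(x,y)=\psi(xy^{-1})$ is a conic combination of positive definite kernels and therefore positive definite; more concretely, for distinct $x_1,\dots,x_n\in G$ and $c_1,\dots,c_n\in\mathbb{C}$,
\begin{equation*}
\sum_{\mu,\nu=1}^{n} c_\mu\overline{c_\nu}K(x_\mu,x_\nu) = \sum_{g\in G} a_g\,\Bigl|\sum_{\mu=1}^{n} c_\mu\xi_g(x_\mu)\Bigr|^{2}\geq 0. \tag{$\ast$}
\end{equation*}

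For the converse direction, I would use all of $G$ as the sample set. By completeness of the characters, any $\psi:G\to\mathbb{C}$ admits the unique expansion $\psi(x)=\sum_{g}a_g\xi_g(x)$ with $a_g = |G|^{-1}\sum_{x\in G}\psi(x)\overline{\xi_g(x)}$ (uniqueness is the statement that the characters are linearly independent, which is immediate from orthogonality). Assuming $K$ is positive definite, I fix $g\in G$, enumerate $G=\{y_1,\ldots,y_{|G|}\}$, and take $c_i := \overline{\xi_g(y_i)}$. After the substitution $z = y_iy_j^{-1}$ and using $\xi_g(y_jy_i^{-1})=\overline{\xi_g(y_iy_j^{-1})}$, the quadratic form collapses to
\begin{equation*}
\sum_{i,j=1}^{|G|} c_i\overline{c_j}\,\psi(y_iy_j^{-1}) = |G|\sum_{z\in G}\psi(z)\overline{\xi_g(z)}= |G|^{2}\,a_g,
\end{equation*}
so $a_g\geq 0$. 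This handles both existence of a nonnegative representation and (together with the already noted uniqueness) the equivalence stated.

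For the strict case, one direction is easy: if some $a_{g_0}=0$, plugging $c_i=\overline{\xi_{g_0}(y_i)}$ (a nonzero vector) into ($\ast$) applied on all of $G$ gives a zero quadratic form, violating strictness. For the nontrivial direction, suppose all $a_g>0$. Given distinct $x_1,\dots,x_n\in G$ and $c_1,\dots,c_n$ with $\sum c_\mu\overline{c_\nu}K(x_\mu,x_\nu)=0$, identity ($\ast$) forces $\sum_{\mu=1}^n c_\mu\xi_g(x_\mu)=0$ for every $g\in G$. Extending $(c_\mu)$ by zero to a function $c:G\to\mathbb{C}$ supported on $\{x_1,\dots,x_n\}$, this says the Fourier transform of $c$ vanishes identically; by Fourier inversion (equivalently, by the invertibility of the $|G|\times|G|$ character matrix $[\xi_g(x)]_{g,x\in G}$), $c\equiv 0$, hence all $c_\mu=0$.

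The main obstacle I anticipate is the strict converse, because it relies crucially on the fact that the characters of a \emph{finite} Abelian group separate points and span $\mathbb{C}^G$; this is what makes the finite setting qualitatively different from, say, $\mathbb{R}^d$. Everything else is an application of orthogonality of characters and the identity ($\ast$).
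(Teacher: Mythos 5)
Your proposal is correct, but note that the paper does not actually prove Theorem \ref{audrey} at all: it is recalled as a known result and attributed to \cite{emonds2011strictly}, where it appears in the more general context of (strictly) positive definite functions on compact Abelian groups. What you supply is the self-contained finite-group specialization, and every step checks out: sufficiency and the strict converse both follow from the identity $\sum_{\mu,\nu} c_\mu\overline{c_\nu}K(x_\mu,x_\nu)=\sum_{g\in G}a_g\bigl|\sum_\mu c_\mu\xi_g(x_\mu)\bigr|^2$ together with the linear independence (indeed orthogonality) of the $|G|$ characters, which gives both the vanishing of the extended coefficient function via invertibility of the character matrix and the uniqueness of the expansion; necessity of $a_g\ge 0$ follows from testing the quadratic form on all of $G$ with $c_i=\overline{\xi_g(y_i)}$, which collapses to $|G|^2 a_g$ exactly as you compute; and the failure of strictness when some $a_{g_0}=0$ uses the same test vector. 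Your closing remark is also the right diagnosis: the whole argument hinges on the characters of a finite Abelian group spanning $\mathbb{C}^G$ and separating points, which is precisely why this finite case is clean while the non-compact or non-torsion settings treated elsewhere in the paper behave differently. So your route is a legitimate, more elementary replacement for the citation; what the citation buys the paper is the general compact-group statement (including the metrizability criterion for non-emptiness of $P^+(G,\mathbb{C})$ mentioned nearby), which your finite-dimensional linear algebra argument does not cover.
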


	\begin{lem} \label{finitealabelian} Let $q_{1}, \ldots , q_{l} \in \{2,3,\ldots \}$, and  $G:= \mathbb{Z}_{q_{1}}\times \ldots \times \mathbb{Z}_{q_{l}}$. Then $P^{+}(G, \mathbb{C})$ is non empty and 
		$$
		P^{+}_{Proj}(G, \mathbb{C}^{\ell})= P^{+}(G, \mathbb{C}^{\ell}),
		$$
		for any $\ell \geq2$.
	\end{lem}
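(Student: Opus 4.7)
The plan is to reduce everything to discrete Fourier analysis on the finite abelian group $G$. The nonemptiness of $P^{+}(G,\mathbb{C})$ is immediate from Theorem \ref{audrey}: simply take $\psi = \sum_{g\in G}\xi_g$, i.e.\ all coefficients $a_g = 1$, which yields a strictly positive definite scalar kernel.

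For the equality $P^{+}_{Proj}(G,\mathbb{C}^{\ell}) = P^{+}(G,\mathbb{C}^{\ell})$, I would first extend Theorem \ref{audrey} to the matrix-valued setting. Any translation-invariant $K : G\times G \to M_\ell(\mathbb{C})$ factors as $K(x,y) = \Psi(xy^{-1})$ for some $\Psi : G \to M_\ell(\mathbb{C})$, and Fourier inversion on the finite group produces coefficients $A_g \in M_\ell(\mathbb{C})$ with
$$K(x,y) = \sum_{g\in G} A_g\, \xi_g(x)\overline{\xi_g(y)}.$$
A direct computation using this decomposition gives, for distinct points $x_1,\ldots,x_n \in G$ and vectors $v_1,\ldots,v_n \in \mathbb{C}^{\ell}$,
$$\sum_{\mu,\nu=1}^{n}\langle K(x_\mu,x_\nu)v_\mu, v_\nu\rangle \;=\; \sum_{g \in G}\langle A_g W_g, W_g\rangle, \qquad W_g := \sum_{\mu}\xi_g(x_\mu)v_\mu.$$
Since the partial character table $[\xi_g(x_\mu)]_{g\in G,\mu=1,\ldots,n}$ is a set of $n$ columns of the invertible full character table of $G$, it has full column rank whenever the $x_\mu$ are distinct, so $W_g = 0$ for every $g$ forces $v_\mu = 0$ for every $\mu$. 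Consequently $K \in P^{+}(G,\mathbb{C}^{\ell})$ if and only if every matrix $A_g$ is positive definite.

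To close the loop, note that the scalar projection $K_v(x,y) = \langle K(x,y)v,v\rangle$ has Fourier coefficients $\langle A_g v,v\rangle$, so Theorem \ref{audrey} gives $K_v \in P^{+}(G,\mathbb{C})$ if and only if $\langle A_g v,v\rangle > 0$ for every $g$. Therefore $K \in P^{+}_{Proj}(G,\mathbb{C}^{\ell})$ if and only if $\langle A_g v,v\rangle > 0$ for every $g$ and every $v \in \mathbb{C}^{\ell}\setminus\{0\}$, which is exactly the condition that each $A_g$ be positive definite as a matrix. Combining with the previous paragraph yields $P^{+}_{Proj}(G,\mathbb{C}^{\ell}) = P^{+}(G,\mathbb{C}^{\ell})$. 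The only delicate point is the full-rank statement for the partial character table, without which the correspondence between strict positive definiteness and strict positivity of the Fourier coefficients would break down; everything else is routine once the finite-group Fourier decomposition is in hand.
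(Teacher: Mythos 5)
Your proposal is correct and follows essentially the same route as the paper's proof: expand the translation-invariant kernel as $K(x,y)=\sum_{g\in G}A_g\,\xi_g(x)\overline{\xi_g(y)}$, use Theorem \ref{audrey} on the scalar projections to force each $A_g$ to be positive definite, and conclude strict positive definiteness from the identity $\sum_{\mu,\nu}\langle K(x_\mu,x_\nu)v_\mu,v_\nu\rangle=\sum_g\langle A_gW_g,W_g\rangle$ together with the linear independence of distinct columns of the character table (the paper's ``introducing coordinates'' step). The only cosmetic differences are that you obtain the decomposition by elementary Fourier inversion on the finite group instead of invoking the operator-valued Bochner result the paper cites, and that you spell out the nonemptiness of $P^{+}(G,\mathbb{C})$ and the full-rank argument explicitly.
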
	  
	
	\begin{proof}
		By Lemma \ref{just2}, we only need to prove the matrix valued case.  Let $\psi: G \to \mathbb{C}$ be such that the  kernel $K(x,y):= \psi(xy^{-1})$ is an element of $ P^{+}_{Proj}(G, \mathbb{C}^{\ell})$, $\ell \in \mathbb{N}$. By the results proved in \cite{neb} we have that $K \in  P(G, \mathbb{C}^{\ell}) $  and it has the following decomposition
		$$
		\psi(xy^{-1})=K(x,y)= \sum_{g \in G}A_{g}(K)\xi_{g}(x) \overline{\xi_{g}(y)} 
		$$
		where $A_{g}(K) \in M_{\ell}(\mathbb{C})$ is a positive semidefinite matrix. But, since $K \in P_{Proj}^{+}(G, \mathbb{C}^{\ell})$, for every $v \in \mathbb{C}^{\ell} \setminus{\{0\}}$, the kernel 
		$$
		K_{v}(x,y)= \langle [ \sum_{g \in G}A_{g}(K)\xi_{g}(x) \overline{\xi_{g}(y)} ] v, v \rangle = \sum_{g \in G}\langle A_{g}(K) v,v \rangle \xi_{g}(x) \overline{\xi_{g}(y)},  
		$$
		is strictly positive definite. Theorem \ref{audrey} implies that the matrices $A_{g}(K) \in M_{\ell}(\mathbb{C})$ are positive definite. Hence, if $x_{1}, \ldots, x_{n} \in G$ are distinct and  $v_{1}, \ldots, v_{n} \in \in \mathbb{C}^{\ell} $, are such that
		\begin{align*}
			0=\sum_{\mu, \nu =1}^{n} \langle K(x_{\mu},x_{\nu})v_{\mu},  v_{\nu} \rangle  &=   \sum_{\mu, \nu =1}^{n} \sum_{g \in G}\langle A_{g}(K)v_{\mu }, v_{\nu}\rangle \xi_{g}(x_{\mu}) \overline{\xi_{g}(x_{\nu})}\\
			&=  \sum_{g \in G} \langle A_{g}(K) \sum_{ \mu=1}^{n} v_{\mu}\xi_{g}(x_{\mu}) , \sum_{ \nu =1}^{n} v_{\nu}\xi_{g}(x_{\nu}) \rangle 
		\end{align*}
		then $\sum_{ \mu=1}^{n} v_{\mu}\xi_{g}(x_{\mu}) =0$, for all $g\in G$. After introducing coordinates we obtain that $v_{\mu}=0$ for all $\mu$, and then $K \in P^{+}(G, \mathbb{C}^{\ell})$.
	\end{proof}

	\begin{cor} \label{finitelabelian2} Let $G$ be an Abelian locally compact torsion group  for which $P^{+}(G, \mathbb{C})$ is non empty. Then
		$$
		P^{+}_{Proj}(G, \mathbb{C}^{\ell})= P^{+}(G, \mathbb{C}^{\ell}),
		$$
		for  any $\ell \in \mathbb{N}$.
	\end{cor}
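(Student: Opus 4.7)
The plan is to reduce the general torsion case to the finite case handled by Lemma \ref{finitealabelian}. The key observation is that strict positive definiteness of a translation invariant kernel is a property that can be verified on finitely generated subgroups, and in a torsion Abelian group these subgroups are finite.

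First, I would fix $K \in P^{+}_{Proj}(G, \mathbb{C}^{\ell})$ and aim to show $K \in P^{+}(G, \mathbb{C}^{\ell})$. Given distinct points $x_{1}, \ldots, x_{n} \in G$ and vectors $v_{1}, \ldots, v_{n} \in \mathbb{C}^{\ell}$ not all zero, consider the subgroup $H := \langle x_{1}, \ldots, x_{n} \rangle$. Since $G$ is Abelian and torsion, $H$ is a finitely generated torsion Abelian group, hence finite; by the structure theorem for finite Abelian groups there is an isomorphism $H \simeq \mathbb{Z}_{q_{1}} \times \cdots \times \mathbb{Z}_{q_{l}}$ for some $q_{1}, \ldots, q_{l}$.

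Next I would observe that the restriction $K|_{H \times H}$ is translation invariant with respect to $H$, and that its scalar valued projections remain strictly positive definite on $H$ (this is immediate from the definition, since any finite family of distinct points in $H$ is in particular a finite family of distinct points in $G$). Transporting $K|_{H \times H}$ through the isomorphism $H \simeq \mathbb{Z}_{q_{1}} \times \cdots \times \mathbb{Z}_{q_{l}}$, we obtain a kernel in $P^{+}_{Proj}(\mathbb{Z}_{q_{1}} \times \cdots \times \mathbb{Z}_{q_{l}}, \mathbb{C}^{\ell})$. Lemma \ref{finitealabelian} then yields that this transported kernel lies in $P^{+}(\mathbb{Z}_{q_{1}} \times \cdots \times \mathbb{Z}_{q_{l}}, \mathbb{C}^{\ell})$, hence $K|_{H \times H} \in P^{+}(H, \mathbb{C}^{\ell})$.

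Applying this conclusion to the chosen points $x_{1}, \ldots, x_{n} \in H$ and vectors $v_{1}, \ldots, v_{n}$, we conclude that $\sum_{\mu,\nu=1}^{n} \langle K(x_{\mu}, x_{\nu}) v_{\mu}, v_{\nu} \rangle > 0$, which is exactly the strict positive definiteness of $K$ on $G$. The main obstacle in writing this up cleanly is just the careful verification that the scalar projections of the restriction remain strictly positive definite and that the isomorphism preserves the relevant structure (translation invariance and strictness of scalar projections); both are routine but need to be stated explicitly so Lemma \ref{finitealabelian} is applied in its stated form.
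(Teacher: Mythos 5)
Your proposal is correct and follows essentially the same route as the paper: the paper reduces strict positive definiteness to the finitely generated (hence finite) subgroups $\langle x_{1},\ldots,x_{n}\rangle$, invokes the structure theorem for finite Abelian groups, and applies Lemma \ref{finitealabelian}, exactly as you do. The verifications you flag (restriction and transport along the isomorphism preserve invariance and strictness of the scalar projections) are indeed routine and are left implicit in the paper as well.
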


	\subsection{Additional examples}
	
	Another class of kernels for which Theorem \ref{principal} can be applied are the  
	isotropic kernels in complex spheres. Let  $X=\Omega^q$, be the unit  complex sphere in $\mathbb{C}^{q}$, the kernel $K$ continuous and also isotropic in the sense that
	$$
	K(Qx,Qy)=K(x,y), \quad x,y \in \Omega^q; Q\in \mathcal{U}(q),
	$$
	where $\mathcal{U}(q)$ is the set of all unitary transformations on $\mathbb{C}^{q}$.\ The characterization of the positive definite scalar valued  kernels fulfilling this condition was achieved in  \cite{pdesferacomplexa}, while the strict case was proved in \cite{unitarily} ($q\geq 2$)  and \cite{spds1} ($q=1$). Similar to the real sphere, 
	there is also the limiting case, of kernels defined on $\Omega^{\infty}$,  the unit complex sphere of an infinity dimensional complex  Hilbert space, $K$ is continuous and the isotropy is the invariance of the kernel for all linear isometries in it and can be found in \cite{esferacomplexainfinita}. The characterization of the positive definite matrix valued case is a consequence of the results in Section $8$ at \cite{BERG2018259}.

	We can apply Theorem \ref{principal} for this type of kernel, because the unitary matrix $e^{i\theta}I$, where $\theta \notin \mathbb{Q}\pi$ and $I$ is the identity matrix in $C^{q}$, defines an aperiodic function in $\Omega^{q}$ that is in the center  of $\mathcal{U}(q)$.  
	
	\section{Adjointly invariant kernels}\label{Adjointly invariant kernels}
	
	In this section  we prove an analogous of Theorem \ref{principal}  to a class of kernels with a different type of symmetry. Due to the similarities in the arguments, we sometimes omit them.
	
	We recall that as involution on an arbitrary set $Z$ is a function $*: Z\to Z$ for which $*(*(z))=z$, for every $z\in Z$, and we  write $z^{*}$ instead of $*(z)$. 
	\begin{defn}\label{unitaril}Let $X$ be a topological space and a semigroup of continuous functions $S\subset C(X, X)$. We say that the kernel $K$ is adjointly invariant by the semigroup $S$ if there exists an involution function $* : S \to S$, for which
		$$
		K(x,\phi(y))= K(\phi^{*}(x), y),   
		$$
		for all  $x,y \in X$ and $\phi \in S$.
	\end{defn}
	
	Unlike Definition \ref{unitarily}, we assume from the beginning in Definition \ref{unitaril} that we have a semigroup of functions because the involution needs to be defined for all elements of the semigroup. Note that for the identity involution we could have used the approach in Definition \ref{unitarily}.

	Similar to unitarily invariant kernels, given a matrix valued   continuous kernel $K$ that  is adjointly invariant by a semigroup of continuous functions with involution $S$,  if it is positive definite we use the notation $K \in P^{*}(X,S, \mathbb{C}^{\ell})$. If in addition the kernel is strictly positive definite we use the notation $K \in P^{+,*}(X,S,\mathbb{C}^{\ell})$. Similarly, if  all of the scalar valued projections of a continuous kernel  $K$ that is unitarily invariant by a semigroup with involution of continuous functions  with $S$ are scalar valued positive definite kernels, we use the notation $K \in P^{*}_{proj}(X,S, \mathbb{C}^{\ell})$. Likewise if in addition all of the scalar valued projections of the kernel $K$ are scalar valued strictly positive definite kernels, we use the notation $K \in P^{*,+}_{proj}(X,S, \mathbb{C}^{\ell})$.

	The following Lemma is a version of Lemma \ref{just2} to the context of adjointly invariant kernels. The proof is omitted due to its similarities.	   
	
	\begin{lem}\label{just2adj}	Let $X$ be a topological space, $(S,*)$ be a semigroup of continuous functions on $X$ with an involution. Then  $P^{+, *}_{proj}(X,S,\mathbb{C}^{\ell})=P^{+, *}(X,S,\mathbb{C}^{\ell}) $ for some $\ell \geq 2$ if and only if  $P^{+, *}_{proj}(X,S,\mathbb{C}^{m})=P^{+, *}(X,S,\mathbb{C}^{m}) $  for every $2\leq m \leq \ell$.
	\end{lem}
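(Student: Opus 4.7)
The plan is to mimic the proof of Lemma \ref{just2} almost verbatim, so I would only need to check that the padding construction used there is compatible with the adjoint invariance. As in that lemma, one direction is trivial: if $P^{+,*}_{proj}(X,S,\mathbb{C}^{m})=P^{+,*}(X,S,\mathbb{C}^{m})$ for every $2 \leq m \leq \ell$, then in particular the equality holds for $m=\ell$. So the real content is the reverse implication, where I start from the equality at some fixed $\ell$ and want to deduce it at every smaller $m$.

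Fix $2 \leq m \leq \ell$ and a kernel $K \in P^{+,*}_{proj}(X,S,\mathbb{C}^{m})$. I would form the scalar valued projection $k(x,y):=\langle K(x,y)e_{1},e_{1}\rangle$, which is strict by hypothesis, and define the block kernel
$$
\widetilde{K}(x,y):=\left[\begin{array}{cc} K(x,y) & 0 \\ 0 & k(x,y)\, I_{\ell-m} \end{array}\right] \in M_{\ell}(\mathbb{C}).
$$
The new ingredient, relative to Lemma \ref{just2}, is to verify that $\widetilde{K}$ is still adjointly invariant under $(S,*)$. For this I would observe that any scalar valued projection of an adjointly invariant matrix valued kernel is itself adjointly invariant, since
$$
k(x,\phi(y))=\langle K(x,\phi(y))e_{1},e_{1}\rangle = \langle K(\phi^{*}(x),y)e_{1},e_{1}\rangle = k(\phi^{*}(x),y),
$$
and then the adjoint invariance of the block $K$, the trivial invariance of the zero blocks, and this computation on the remaining diagonal blocks together give $\widetilde{K}(x,\phi(y))=\widetilde{K}(\phi^{*}(x),y)$ entrywise. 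This is the only genuinely new step.

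Next I would check $\widetilde{K} \in P^{+,*}_{proj}(X,S,\mathbb{C}^{\ell})$. For $v=(v',v'')\in \mathbb{C}^{m}\times \mathbb{C}^{\ell-m}\setminus\{0\}$, the scalar valued projection splits as $\widetilde{K}_{v}=K_{v'}+\|v''\|^{2}\, k$. If $v'\neq 0$ then $K_{v'}$ is strict and $k$ is positive definite, so the sum is strict; if $v'=0$ then $\|v''\|^{2}>0$ and $\widetilde{K}_{v}$ is a positive multiple of the strict kernel $k$. Hence $\widetilde{K}\in P^{+,*}_{proj}(X,S,\mathbb{C}^{\ell})$. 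By the hypothesized equality at level $\ell$, this forces $\widetilde{K}\in P^{+,*}(X,S,\mathbb{C}^{\ell})$. Finally, to recover strictness of $K$, I would take distinct $x_{1},\ldots,x_{n}\in X$ and $v_{1},\ldots,v_{n}\in \mathbb{C}^{m}$, embed each as $\widetilde{v}_{\mu}=(v_{\mu},0)\in\mathbb{C}^{\ell}$, and note that
$$
\sum_{\mu,\nu=1}^{n}\langle \widetilde{K}(x_{\mu},x_{\nu})\widetilde{v}_{\mu},\widetilde{v}_{\nu}\rangle = \sum_{\mu,\nu=1}^{n}\langle K(x_{\mu},x_{\nu})v_{\mu},v_{\nu}\rangle,
$$
so vanishing of the right-hand side would force all $\widetilde{v}_{\mu}=0$, hence all $v_{\mu}=0$, giving $K\in P^{+,*}(X,S,\mathbb{C}^{m})$. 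I do not expect any serious obstacle here; the only step worth pausing over is the adjoint invariance of the padded kernel, and it reduces to the one-line calculation above.
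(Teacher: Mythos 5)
Your proposal is correct and follows essentially the same route as the paper: the paper proves this lemma by the same zero-padding construction as Lemma \ref{just2} (put the strict scalar projection $\langle K e_{1},e_{1}\rangle$ on the extra diagonal entries and zeros elsewhere), and your only added content is the explicit one-line verification that the padded kernel remains adjointly invariant, which the paper leaves implicit when it says the proof is omitted due to its similarities.
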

	
	The following is a version of Theorem \ref{construcao} to the context of adjointly invariant kernels.
	
	\begin{thm}\label{constructadj} Let $k \in P^{*}(X,S,\mathbb{C})$ and $\phi \in Z(S)$. The matrix valued kernel $K: X \times X \to M_{2}(\mathbb{C})$ given by
		$$
		K(x,y)=\left[K_{\mu\nu}\right]_{\mu,\nu=1}^2=\left[\begin{array}{cc}
			k(\phi(x),\phi(y)) & k(\phi(x),y) \\
			k(x, \phi(y)) & k(x,y)
		\end{array}\right], \quad x, y \in X,
		$$	
		belongs to $P^{*}(X,S,\mathbb{C}^{2})\setminus P^{+,*}(X,S,\mathbb{C}^{2})$.  
	\end{thm}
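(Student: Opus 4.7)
The plan is to mimic the proof of Theorem \ref{construcao} step by step, replacing the unitary-invariance verification by an adjoint-invariance verification, while reusing the positivity and rank-deficiency arguments verbatim. Continuity of $K$ is immediate from continuity of $k$ and $\phi$, so only three non-trivial points remain: (a) adjoint invariance of $K$ under $S$, (b) positive definiteness, and (c) failure of strict positive definiteness.

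For (a), I would pick an arbitrary $\psi\in S$ and verify $K(x,\psi(y))=K(\psi^{*}(x),y)$ entry by entry. The essential input is that $\phi\in Z(S)$ commutes not only with $\psi$ but also with $\psi^{*}\in S$. For the $(1,1)$ entry, I push $\phi$ through $\psi$ on the right, then apply the involution relation for $k$, then push $\phi$ back through $\psi^{*}$:
\[
K_{11}(x,\psi(y))=k(\phi(x),\psi(\phi(y)))=k(\psi^{*}(\phi(x)),\phi(y))=k(\phi(\psi^{*}(x)),\phi(y))=K_{11}(\psi^{*}(x),y).
\]
The other three entries are analogous; the $(2,2)$ entry uses only the involution for $k$ and no centrality, while the two off-diagonal entries each consume one commutation and one application of the involution.

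For (b), I would observe that the argument in the proof of Theorem \ref{construcao} is purely algebraic and never uses the form of invariance: the $2n\times 2n$ scalar matrix coming from $[K(x_\mu,x_\nu)]_{\mu,\nu=1}^{n}$ is exactly $[k(y_i,y_j)]_{i,j=1}^{2n}$ for the same interleaving $y_i=\phi(x_i)$ for $i\le n$ and $y_i=x_{i-n}$ for $i>n$. Since $k\in P^{*}(X,S,\mathbb{C})$ is in particular a scalar positive definite kernel, that matrix is positive semidefinite, so $K\in P^{*}(X,S,\mathbb{C}^{2})$.

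For (c), I would copy the dichotomy from Theorem \ref{construcao}: if there exists $x\in X$ with $\phi(x)=x$ then $K(x,x)$ has all four entries equal to $k(x,x)$ and hence rank at most one; otherwise I choose distinct $x_1=x$, $x_2=\phi(x)$ and note that the associated list $(y_1,y_2,y_3,y_4)=(\phi(x),\phi^{2}(x),x,\phi(x))$ has $y_1=y_4$, so the $4\times 4$ matrix $[k(y_i,y_j)]$ is rank-deficient and therefore $[K(x_\mu,x_\nu)]_{\mu,\nu=1}^{2}$ is singular. In neither case do I need injectivity of $\phi$ (unlike the second half of Theorem \ref{construcao}), which is consistent with the fact that the present theorem makes no injectivity claim. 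The only genuinely new work is the bookkeeping in Step (a); the main potential obstacle is simply making sure that $\psi^{*}$ (and not only $\psi$) lies in $S$ so that centrality of $\phi$ is applicable there, which is guaranteed by the definition of an involution $*:S\to S$.
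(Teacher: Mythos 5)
Your proposal is correct and follows essentially the same route as the paper: an entrywise verification of the adjoint invariance $K(x,\psi(y))=K(\psi^{*}(x),y)$ using only that $\phi$ commutes with both $\psi$ and $\psi^{*}\in S$, followed by reusing verbatim the positivity (interleaved points $y_i$) and rank-deficiency arguments from Theorem \ref{construcao}. Your remark that injectivity of $\phi$ is nowhere needed matches the paper's own observation after the theorem.
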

	\begin{proof}The kernel $K$ is continuous because both the kernel $k$ and the function $\phi$ are continuous. Note that $\psi^{*}\phi= \phi\psi^{*}$ for all $\psi \in S$, so  $\psi\phi^{*}= \phi^{*}\psi$ for all $\psi \in S$ and then $\phi^{*} \in Z(S)$. The kernel $K$ is adjointly invariant by the semigroup $S$ because if $x,y \in X$ and $\psi \in S$, we have that
		\begin{align*}
			K_{11}(x, \psi(y))&= k(\phi(x), \phi(\psi(y))) =k(\phi(x), \psi(\phi(y)))=k(\psi^{*}(\phi(x)), \phi(y))\\
			&= k(\phi(\psi^{*}(x)), \phi(y))=K_{11}( \psi^{*}(x), y).
		\end{align*}
		It follows that the $K_{11}$ is adjointly invariant by the semigroup $S$. The rest of the arguments are the same as the ones in the proof of Theorem \ref{construcao}.
	\end{proof}
	
	Unlike Theorem \ref{construcao}, we do not get from the non emptiness of $P^{+,*}(X,S)$ the injectivity of all functions in $S$. 
	
	Note that in  Lemma \ref{quantidade} and Corollary \ref{quantidade2} we did not make any use of the kernel, actually we only used the properties of the functions on the semigroup, so, they are still valid in this new setting and we immediately have the proof of Theorem \ref{principal} to the context of adjointly invariant by a semigroup of functions.

	\begin{thm} \label{principal2}   Suppose that $(S,*)$ is a semigroup  with involution of functions in $C(X,X)$,  $\phi \in Z(S)$ is an aperiodic injective function and that  $P^{+,*}(X,S)$ is non empty. Then, for any $\ell \geq 2$
		$$
		P^{+,*}(X,S, \mathbb{C}^{\ell}) \subsetneq  P^{+,*}_{Proj}(X,S, \mathbb{C}^{\ell})
		$$
	\end{thm}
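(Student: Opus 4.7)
The plan is to mirror the proof of Theorem \ref{principal} almost verbatim, since the author has explicitly observed that Lemma \ref{quantidade} and Corollary \ref{quantidade2} depend only on the combinatorics of the aperiodic injective function $\phi$ and not on the kernel, and that Theorem \ref{constructadj} already produces the desired counterexample kernel in $P^{*}(X,S,\mathbb{C}^{2})\setminus P^{+,*}(X,S,\mathbb{C}^{2})$. The only genuinely new checks are bookkeeping: confirming that the construction of Theorem \ref{constructadj} lies in $P^{+,*}_{Proj}(X,S,\mathbb{C}^{2})$, and that the reduction to $\ell=2$ goes through in this involutive setting.

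First, I would invoke Lemma \ref{just2adj} to reduce the statement to $\ell=2$. Then I would pick any $k\in P^{+,*}(X,S)$ (nonempty by hypothesis) together with the aperiodic injective $\phi\in Z(S)$ and form the $M_2(\mathbb{C})$-valued kernel
\[
K(x,y)=\begin{pmatrix} k(\phi(x),\phi(y)) & k(\phi(x),y)\\ k(x,\phi(y)) & k(x,y) \end{pmatrix}.
\]
By Theorem \ref{constructadj}, $K$ lies in $P^{*}(X,S,\mathbb{C}^{2})\setminus P^{+,*}(X,S,\mathbb{C}^{2})$, so it only remains to show $K\in P^{+,*}_{Proj}(X,S,\mathbb{C}^{2})$.

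For that, I would fix $v=(v_1,v_2)\in\mathbb{C}^2\setminus\{0\}$ and distinct points $x_1,\dots,x_n\in X$, assume a relation $\sum_{\mu,\nu} c_\mu\overline{c_\nu}K_v(x_\mu,x_\nu)=0$, and expand the projection to obtain a double sum in the $2n$ points $y_1,\dots,y_{2n}=\phi(x_1),\dots,\phi(x_n),x_1,\dots,x_n$ with weights $d_i=v_1 c_i$ for $i\le n$ and $d_{i+n}=v_2 c_i$. Using Lemma \ref{quantidade} and Corollary \ref{quantidade2}, I would then collapse these $2n$ (possibly repeated) evaluations onto the $m+2p$ distinct representatives $z_1,\dots,z_{m+2p}$, with aggregated coefficients $e_\alpha$ of exactly the three types (I), (II), (III) used in the proof of Theorem \ref{principal}. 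The strict positive definiteness of $k$ applied to the distinct points $z_\alpha$ forces $e_\alpha=0$ for all $\alpha$, and then the same two-case dichotomy ($v_1\neq 0$ versus $v_1=0$) together with the chain descent $\tau^q(\mu),\tau^{q-1}(\mu),\dots$ supplied by part (iii) of Lemma \ref{quantidade} propagates $c_\mu=0$ for every $\mu$, giving strict positive definiteness of $K_v$.

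The main (and only real) obstacle I expect is making sure that nothing in the original propagation argument secretly used the unitary-invariance hypothesis rather than just the semigroup combinatorics; once this is verified, the proof is essentially a transcription. I would close by remarking that, in contrast with Theorem \ref{principal}, the injectivity of $\phi$ must now be assumed as a hypothesis rather than derived from the nonemptiness of $P^{+,*}(X,S)$, as already pointed out immediately after Theorem \ref{constructadj}.
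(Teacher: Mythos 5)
Your proposal is correct and follows essentially the same route as the paper, which likewise reduces to $\ell=2$ via Lemma \ref{just2adj}, invokes Theorem \ref{constructadj} for the counterexample kernel, and then observes that Lemma \ref{quantidade} and Corollary \ref{quantidade2} are purely combinatorial so the strictness-propagation argument of Theorem \ref{principal} carries over unchanged. Your closing remark that injectivity of $\phi$ must now be a hypothesis rather than a consequence matches the paper's comment following Theorem \ref{constructadj}.
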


	\subsection{Dot product kernels}
	In this subsection we explain in detail how dot product kernels can be understood as an adjointly invariant kernel and why we had to add the identity matrix to obtain Example \ref{examplekern} for this class of kernels.
	
	\begin{lem}\label{dot1} Let $\mathcal{H}$ be a real Hilbert space and a continuous kernel $K: \mathcal{H} \times \mathcal{H} \to \mathbb{C}$. If 
		$$
		K(Ax, y)= K(x, A^{t}y), \quad x,y \in \mathcal{H}, \quad A \in \mathcal{L}(\mathcal{H}) 
		$$	
		then, there exists a continuous function $h: \mathbb{R} \to \mathbb{C}$ for which $K(x,y)=h(\langle x, y \rangle )$.	
	\end{lem}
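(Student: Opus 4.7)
The plan is to extract from the adjoint-invariance identity that $K(x,y)$ depends only on $\langle x, y \rangle$, by specializing the hypothesis to three convenient families of operators $A \in \mathcal{L}(\mathcal{H})$. First I would take $A = U$ orthogonal, so that $A^{t} = U^{-1}$: the identity becomes $K(Ux, Uy) = K(x,y)$ for every $U \in \mathcal{O}(\mathcal{H})$, which shows that $K(x,y)$ depends on $(x,y)$ only through the Gram data $(\|x\|, \|y\|, \langle x, y\rangle)$, since any two pairs with the same Gram matrix are related by an orthogonal operator (extend an isometry on a two-dimensional span by the identity on its orthogonal complement). Second, taking $A = \lambda I$ for $\lambda \in \mathbb{R}$ gives the scaling identity $K(\lambda x, y) = K(x, \lambda y)$. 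Third, for $y \neq 0$, take $A$ to be the orthogonal projection onto $\operatorname{span}(y)$, which is self-adjoint and fixes $y$; this yields
\[
K(x,y) \;=\; K(x, Ay) \;=\; K(Ax, y) \;=\; K\!\left(\tfrac{\langle x,y\rangle}{\|y\|^{2}}\, y,\; y\right).
\]

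Fixing once and for all a unit vector $y_{0} \in \mathcal{H}$, I would define $h(c) := K(c y_{0}, y_{0})$, which is continuous on $\mathbb{R}$ because $K$ is continuous. To verify $K(x,y) = h(\langle x, y\rangle)$ when $y \neq 0$, set $c = \langle x, y\rangle$ and $\alpha = \|y\|$. By the third step, $K(x,y) = K((c/\alpha^{2})\, y,\, y)$, and by the first step applied to an orthogonal operator sending $y/\alpha$ to $y_{0}$, this equals $K((c/\alpha)\, y_{0},\, \alpha y_{0})$. The scaling identity with $\lambda = \alpha$ then rewrites the right-hand side as $K(c y_{0}, y_{0}) = h(c)$. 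For the degenerate case $y = 0$, the scaling identity with $\lambda = 0$ gives $K(x, 0) = K(0, y_{0}) = h(0) = h(\langle x, 0\rangle)$, so the formula $K(x,y) = h(\langle x,y\rangle)$ holds for all $x,y \in \mathcal{H}$.

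The only real obstacle is the bookkeeping required to combine the three specializations cleanly; no individual step is technically deep. The mild subtlety is that the reduction to a fixed reference direction $y_{0}$ requires extending an isometry between two one-dimensional subspaces to a bona fide element of $\mathcal{O}(\mathcal{H})$, which is standard (rotate in the two-dimensional span containing $y_{0}$ and $y/\alpha$, and take the identity on its orthogonal complement).
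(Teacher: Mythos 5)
Your proposal is correct, and it reaches the same endpoint as the paper --- reduce $K(x,y)$ to the value of $K$ at a pair lying on a fixed reference line and define $h$ there --- but via a different decomposition of the invariance hypothesis. The paper builds, for each pair $(x,y)$, a single tailored operator $A$ with $Ae_{\lambda^{1}}=x$ and $A^{t}y=\langle x,y\rangle e_{\lambda^{1}}$, which collapses $K(x,y)=K(e_{\lambda^{1}},\langle x,y\rangle e_{\lambda^{1}})$ in one step; this forces a case split according to whether $x,y$ are linearly dependent or independent, and the points with $x=0$ or $y=0$ are recovered only by a continuity/limiting argument. You instead factor the reduction through three structural consequences of the hypothesis: orthogonal invariance ($A=U$ with $U^{t}=U^{-1}$), the scaling symmetry $K(\lambda x,y)=K(x,\lambda y)$, and self-adjointness of the projection onto $\operatorname{span}(y)$. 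This buys you a proof with no dependence/independence dichotomy, an exact (not limiting) treatment of $y=0$ via $\lambda=0$, and it isolates the reusable facts (e.g.\ that $K$ depends only on the Gram data) that the paper leaves implicit; the price is slightly more bookkeeping, including the standard extension of the map $y/\|y\|\mapsto y_{0}$ to an orthogonal operator by rotating in the two-dimensional span and fixing its orthogonal complement, which you correctly flag and which is valid in any real Hilbert space (in dimension one it is just $\pm I$). All three of your operator families are bounded, so they are legitimate instances of the hypothesis, and the chain $K(x,y)=K\bigl(\tfrac{\langle x,y\rangle}{\|y\|^{2}}y,y\bigr)=K\bigl(\tfrac{\langle x,y\rangle}{\|y\|}y_{0},\|y\|y_{0}\bigr)=K(\langle x,y\rangle y_{0},y_{0})=h(\langle x,y\rangle)$ checks out.
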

	
	\begin{proof}
		Let $x,y$ be nonzero elements of $\mathcal{H} $. Let $(e_{\lambda})_{\lambda \in\Lambda}$ be a complete orthonormal basis for $\mathbb{H}$.\\ 
		If $x,y$ are linearly dependent, pick an  operator $A$  for which $A(e_{\lambda^{1}}):= x$ and  $A(e_{\lambda})$ is orthogonal with respect to $x$ for all $\lambda \neq \lambda^{1}$. Hence, $A^{t}(y)= \langle x, y \rangle e_{\lambda^{1}}$ and then
		$$
		K(x,y)= K(Ae_{\lambda^{1}}, y)= K(e_{\lambda^{1}}, A^{t}y)= K(e_{\lambda^{1}}, \langle x, y \rangle e_{\lambda^{1}}).
		$$
		If $x,y$ are linearly independent, pick an  operator $A$  for which $A(e_{\lambda^{1}}):= x$, $A(e_{\lambda^{2}}):=x - \langle x, y \rangle y/ \|y\|^{2}$  and $A(e_{\lambda})$ is orthogonal with respect to $x$ and $y$ for all $\lambda \neq \lambda^{1}, \lambda^{2}$. Hence, $A^{t}(y)= \langle x, y \rangle e_{\lambda^{1}}$ and then 
		$$
		K(x,y)= K(Ae_{\lambda^{1}}, y)= K(e_{\lambda^{1}}, A^{t}y)= K(e_{\lambda^{1}}, \langle x, y \rangle e_{\lambda^{1}}).
		$$
		By the continuity of $K$, we may  include the cases where either $x$ or $y$ are zero. 	
	\end{proof}

	With the exact same arguments it is possible to prove a complex version of Lemma \ref{dot1} 
	
	\begin{lem}\label{dot2} Let $\mathcal{H}$ be a complex Hilbert space and a continuous kernel $K: \mathcal{H} \times \mathcal{H} \to \mathbb{C}$. If 
		$$
		K(Ax, y)= K(x, A^{*}y), \quad x,y \in \mathcal{H} , \quad A \text{ is injective } 
		$$	
		then, there exists a continuous function $h: \mathbb{C} \to \mathbb{C}$ for which $K(x,y)=h(\langle x, y \rangle )$.	
	\end{lem}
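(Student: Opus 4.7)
My plan is to adapt the proof of Lemma \ref{dot1} to the complex setting, with the single new wrinkle being that the operator $A$ produced must be injective, since the adjointness hypothesis here only applies to such operators. Fix an orthonormal basis $(e_\lambda)_{\lambda \in \Lambda}$ of $\mathcal{H}$ and a distinguished index $\lambda^1$. For any pair of nonzero vectors $x, y \in \mathcal{H}$, I would construct an injective $A \in \mathcal{L}(\mathcal{H})$ with $Ae_{\lambda^1} = x$ and $\langle Ae_\lambda, y\rangle = 0$ for every $\lambda \neq \lambda^1$, exactly as in the real proof; in the linearly independent case I would additionally single out $e_{\lambda^2}$ and set $Ae_{\lambda^2} := x - \langle x, y\rangle y/\|y\|^2$, requiring $Ae_\lambda \perp \{x, y\}$ for the remaining $\lambda$. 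Expanding $A^* y$ in the basis $(e_\lambda)$ then forces $A^* y = \langle x, y\rangle e_{\lambda^1}$ (possibly conjugated, depending on the sesquilinearity convention, but this conjugation is absorbed into the definition of $h$).

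The one genuinely new step relative to Lemma \ref{dot1} is enforcing injectivity of $A$. The prescription above fixes $A$ only on one or two basis vectors; for the remaining basis vectors I would define the restriction of $A$ to be a bounded injective linear map into the closed subspace $\{x, y\}^\perp$. Because $\mathcal{H}$ is infinite dimensional, this subspace is infinite dimensional and admits such an injection from the closed span of the remaining basis vectors. Since the range of this restriction lies inside $\{x\}^\perp$, it meets $\mathrm{span}(x)$ only at zero, which together with $Ae_{\lambda^1} = x$ makes the whole operator $A$ injective.

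With $A$ in hand, the hypothesis yields
\[
K(x, y) = K(Ae_{\lambda^1}, y) = K(e_{\lambda^1}, A^* y) = K(e_{\lambda^1}, \langle x, y\rangle e_{\lambda^1}),
\]
so setting $h(\alpha) := K(e_{\lambda^1}, \alpha e_{\lambda^1})$ gives $K(x, y) = h(\langle x, y\rangle)$ for all nonzero $x, y$, with the zero vector cases recovered by continuity of $K$ and the continuity of $h$ inherited from that of $K$. The principal obstacle, absent in the real version, is precisely the injectivity of $A$: in the case $\langle x, y\rangle = 0$ we need $A^* y = 0$, which forces $\mathrm{range}(A) \subset \{y\}^\perp$, a proper closed subspace, so injectivity demands that $\mathcal{H}$ be infinite dimensional (an assumption that is implicit in the statement).
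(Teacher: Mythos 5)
Your overall route is the paper's own: the paper proves this lemma simply by declaring that the construction of Lemma \ref{dot1} goes through verbatim, and you reproduce that construction while, quite rightly, worrying about the one point the paper glosses over, namely that here the adjoint identity is only assumed for \emph{injective} $A$. However, two things go wrong. First, by retaining the clause $Ae_{\lambda^{2}}:=x-\langle x,y\rangle y/\|y\|^{2}$ you undercut your own injectivity argument: the image of $e_{\lambda^{2}}$ is in general not in $\{x\}^{\perp}$, and when $\langle x,y\rangle=0$ it equals $x=Ae_{\lambda^{1}}$, so the operator you build fails to be injective even when $\mathcal{H}$ is infinite dimensional. That clause is superfluous (all that is needed is $Ae_{\lambda^{1}}=x$ and $Ae_{\lambda}\perp y$ for $\lambda\neq\lambda^{1}$), so the repair is simply to drop it and send every remaining basis vector injectively into $\{x,y\}^{\perp}$, as you already do for the other indices.

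Second, and more substantively, your closing claim that injectivity ``demands that $\mathcal{H}$ be infinite dimensional'' and that this is implicit in the statement is not correct: the lemma carries no dimension restriction, and the paragraph immediately following it discusses $\dim_{\mathbb{C}}(\mathcal{H})\geq 3$ and $\dim_{\mathbb{C}}=1,2$, so finite dimensions are very much intended. The finite-dimensional obstruction only concerns pairs with $\langle x,y\rangle=0$, where an injective $A$ with $A^{*}y=0$ indeed cannot exist; but such pairs should be absorbed by the same continuity device the real proof already uses for the zero vectors, not by restricting the statement. Concretely, for $\langle x,y\rangle\neq 0$ take $A$ with $Ae_{\lambda^{1}}=x$ and mapping the closed span of $\{e_{\lambda}:\lambda\neq\lambda^{1}\}$ boundedly and injectively into $\{y\}^{\perp}$ (onto it, in finite dimensions); since $x\notin\{y\}^{\perp}$ this $A$ is injective in any dimension, and the hypothesis gives $K(x,y)=h(\langle x,y\rangle)$ with your $h$. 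The set $\{(x,y):\langle x,y\rangle\neq 0\}$ is dense in $\mathcal{H}\times\mathcal{H}$, and both $K$ and $(x,y)\mapsto h(\langle x,y\rangle)$ are continuous, so the identity extends to all pairs, including the orthogonal ones and the zero vectors. With these two repairs your argument proves the lemma as stated; as written, it establishes it only for infinite-dimensional $\mathcal{H}$ and its construction breaks precisely at orthogonal pairs.
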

	
	The characterization of which functions $h$ generates a positive definite kernel is obtained  in \cite{esferacomplexainfinita} when $\dim(H)= \infty$, and generalized to   $\dim_{\mathbb{C}}(H)\geq 3$ in \cite{pinkuscompl}. We are not aware of a complete characterization  for the cases $\dim_{\mathbb{C}}=1,2$.  The strict  case was also proved in \cite{pinkuscompl} for $\dim_{\mathbb{C}}(H)\geq 3$.

	Usually, the dot product kernels are presented in a different way, as the characterization of which functions $h: \mathbb{R} \to \mathbb{C}$ are such that for any positive semidefinite matrix $A$, the matrix $h(A)$ (pointwise defined) is positive semidefinite, that is, which functions preserves positivity. Along this line several important results were achieved in   \cite{momentsequence}, \cite{Belton2019} and  \cite{PositivityII}.  
	
	By taking $k$ and $\phi(x) = rx$ ($r$ is not a root of unity in the real or complex case), the counterexample in Theorem \ref{principal2}  can only be applied to the set $\mathcal{H}\setminus{\{0\}}$ ($0$ is a fixed point). If we add the point $0 \in \mathcal{H}$, the matrix valued kernel defined in Theorem  \ref{constructadj}  does not belong to $P^{+,*}_{proj}(\mathcal{H}, S, \mathbb{C}^{2})$ (the matrix $K(0,0)$ is not invertible). However, consider the matrix valued kernel  
	$$
	K(x,y) + k(0,0)I :=\left[\begin{array}{cc}
		k(rx, ry)+k(0,0) & k(rx,y) \\
		k(x, ry)  & k(x,y) +  k(0,0)
	\end{array}\right], \quad x,y \in \mathcal{H}.
	$$
	- It belongs to $P^{*}(\mathcal{H}, S, \mathbb{C}^{2})\setminus{P^{+,*}(\mathcal{H}, S, \mathbb{C}^{2})}$. The main issue is to prove that it does not belongs to $P^{+,*}(\mathcal{H}, S, \mathbb{C}^{2})$. For  that we pick $3$ points (instead of $2$ used in Theorem \ref{constructadj}), being  $0, x, rx$ and the corresponding vectors in order $(-1,1), (1,0), (0,-1)$.\\
	-If $k \in P^{+}(\mathcal{H}, S)$ and $r$ is not a root of unity (in the real or complex case), then it belongs to $P_{proj}^{+,*}(\mathcal{H}, S, \mathbb{C}^{2})$. To prove this we divide the argument into three parts:

	Part 1)  The scalar valued kernel $l(x,y):=k(x,y) - k(0,0)$ is strict in $\mathcal{H}\setminus{\{0\}}$:\\
	Just note that 
	$$
	\sum_{\mu, \nu=1}^{n}c_{\mu}\overline{c_{\nu}}l(x_{\mu},x_{\nu})=\sum_{\mu, \nu=1}^{n}c_{\mu}\overline{c_{\nu}}[ k(x_{\mu},x_{\nu}) - k(0,0)] = \sum_{\mu, \nu=0}^{n}c_{\mu}\overline{c_{\nu}}k(x_{\mu}, x_{\nu})
	$$
	where $c_{0}:= - \sum_{\mu=1}^{n}c_{\mu}$ and $x_{0}:= 0$.

	Part 2) The matrix valued kernel $L$ belongs to $P^{*}(\mathcal{H}\setminus{\{0\}}, S, \mathbb{C}^{2})\setminus{P^{+,*}(\mathcal{H}\setminus{\{0\}}, S, \mathbb{C}^{2})}$ and to $P^{+,*}_{proj}(\mathcal{H}\setminus{\{0\}}, S, \mathbb{C}^{2})$ when $r$ is not a root of unity (in the real or complex case):\\
	This occurs because  we are dealing with an aperiodic and injective function in $\mathcal{H}\setminus{\{0\}}$, so we can apply Theorem \ref{constructadj} and Theorem \ref{principal2}.

	Part 3) Conclusion:\\ 
	-Let $v= (v_{1}, v_{2})\neq (0,0)$. Then, for distinct points $x_{0}, \ldots x_{n} \in \mathcal {H}$, where $x_{0}=0$, and complex scalars $c_{0}, \ldots c_{n}$,  the scalar valued projection of the kernel $K + k(0,0)I$ in $v$  satisfies
	\begin{align*}
		\sum_{\mu, \nu=0}&c_{\mu}\overline{c_{\nu}}[K_{v}(x_{\mu}, x_{\nu}) + k(0,0)\|v\|^{2}]\\
		=\sum_{\mu, \nu=0}&c_{\mu}\overline{c_{\nu}}\left [K_{v}(x_{\mu}, x_{\nu})-k(0,0)|v_{1} + v_{2}|^{2} +k(0,0)|v_{1} + v_{2}|^{2}+ k(0,0)\|v\|^{2}\right ] \\
		= \sum_{\mu, \nu=1}&c_{\mu}\overline{c_{\nu}}L_{v}(x_{\mu}, x_{\nu})  +  \sum_{\mu, \nu=0}c_{\mu}\overline{c_{\nu}}\left [k(0,0)|v_{1} + v_{2}|^{2}+ k(0,0)\|v\|^{2} \right ] \\
		= \sum_{\mu, \nu=1}&c_{\mu}\overline{c_{\nu}}L_{v}(x_{\mu}, x_{\nu}) +  \left |\sum_{\mu=0}c_{\mu} \right |^{2}\left [k(0,0)|v_{1} + v_{2}|^{2}+ k(0,0)\|v\|^{2}\right ]. 
	\end{align*}
	Note that both terms are nonnegative. Since the first term does not include the element $0$, Part $2$ implies that it is zero if and only if all scalars $c_{1}, \ldots, c_{n}$ are zero. On the other hand, if all scalars  $c_{1}, \ldots, c_{n}$ are zero the second term implies that $c_{0}=0$, which concludes that $K + k(0,0)I$ is in $P_{proj}^{+,*}(\mathcal{H}, S, \mathbb{C}^{2})$.

	\begin{rem} The previous result can be generalized to an abstract setting with the same type of argument. Suppose that $\tilde{X}= X\cup \{0\}$ is a topological space for which $0$ is an accumulation point. Let $(S, *)$ is a semigroup of functions in $C(\tilde{X}, \tilde{X})$ with the identity for which $\phi(0)=0$ for any $\phi \in S$ and for every $x \in X$ its orbit satisfies $\{\phi(x), \phi \in S\}= X$ (those two properties implies that if $k \in P^{*}(\tilde{X},S)$ then $k(0,0)=k(0,x)$ for any $x \in \tilde{X}$).\\
		Hence, for any $k \in P^{*}(\tilde{X},S)$ and $\phi \in S$ the matrix valued kernel
		$$
		(x,y) \in \tilde{X}\times \tilde{X}\to  \left[\begin{array}{cc}
			k(\phi(x), \phi(y))+k(0,0) & k(\phi(x), y)  \\
			k(x,\phi(y))  & k(x,y) +  k(0,0)
		\end{array}\right], 
		$$
		belongs to $P^{*}(\tilde{X}, S, \mathbb{C}^{2})\setminus{P^{+,*}(\tilde{X}, S, \mathbb{C}^{2})}$. In addition, if  $k \in P^{+,*}(\tilde{X},S)$ and $\phi$ is an aperiodic injective  function when restricted to $X$, the previous kernel is an element of $P_{proj}^{+,*}(\tilde{X}, S, \mathbb{C}^{2})$.  	    
	\end{rem}

	Other examples of adjointly invariant  kernels are:
	
	-$X=\mathbb{R}^{m}$, $\phi_{z}(x)=x+z$ and $(\phi_{z})^{*}= \phi_{-z}$. This would lead to Bochner  (or translational invariant) kernels in $\mathbb{R}^{m}$.
	
	-$X=\mathbb{R}^{m}$, $\phi_{z}(x)=x+z$ and $(\phi_{z})^{*}= \phi_{z}$. This would lead to characterizing which functions $h: \mathbb{R}^{m} \to \mathbb{R}$ the kernel $h(x+y)$ is positive definite, which is proved in Theorem $6.5.11$ in \cite{berg0}.

\section{Acknowledgments}
Jean Carlo Guella was funded by grant 2021/04226-0, S\~ao Paulo Research Foundation
(FAPESP).

\bibliographystyle{siam} 
		\bibliography{Referrences}

\end{document}